\crefname{equation}{}{}
\Crefname{equation}{}{}
   \def\marginparright{\@mparswitchfalse}
\marginparright\usepackage{csquotes}
\newtheorem{example}{Example}
\newtheorem{remark}{Remark}
\newtheorem{definition}{Definition}
\newtheorem{lemma}{Lemma}
\newtheorem{theorem}{Theorem}
\newcounter{romannumber}
\newenvironment{romannum}
               {\begin{list}{{(\roman{romannumber})}}{\usecounter{romannumber}
                \setlength{\leftmargin}{0pt}
                \setlength{\itemindent}{42pt}}}{\end{list}}
\newlength{\ml}
\newcommand{\tu}{
  \tikz [
    grow=up,
    level distance=1.0\ml,
    sibling distance=1.0\ml,	
    every node/.style={shading=ball, ball color=black, circle, inner sep=0.18\ml}
  ]
  \node[ball color=blue] {}
  child{ node[star] {}
  child{ node {} }
  child{ node[ball color=white] {} } }
  ;
}
\newcommand{\tv}{
  \tikz [
    grow=up,
    level distance=1.0\ml,
    sibling distance=1.0\ml,	
    every  node/.style={shading=ball, ball color=black, circle, inner sep=0.18\ml}
  ]
  \node[ball color=red, star] {}
  child{ node {} }
  child{ node[ball color=white] {}
  child{ node[star] {} }}
  ;
}
\newcommand{\tuv}{
  \tikz [
    grow=up, 	
    level distance=1.0\ml,
    level 1/.style={sibling distance=1.8\ml},
    level 2/.style={sibling distance=1.0\ml},
    every node/.style={shading=ball, ball color=black, circle, inner sep=0.18\ml}
  ]
  \node[ball color=blue] {}
  child{ node[ball color=red, star] {}
  child{ node {} }
  child{ node[ball color=white] {}
  child{ node[star] {} }}}
  child{ node[star] {}
    child{ node {} }
    child{ node[ball color=white] {} }} ;
  }
  \newcommand{\tvu}{
    \tikz [
      grow=up,
      level distance=1.0\ml,
     sibling distance=1.0\ml,
      every node/.style={shading=ball, ball color=black, circle, inner sep=0.18\ml}
    ]
    \node[ball color=red, star] {}
    child{ node {} }
    child{ node[ball color=white] {}
    child{ node[star] {} }}
    child{ node[ball color=blue] {}
	child{ node[star] {}
    child{ node {} }
    child{ node[ball color=white] {} }}}
    ;
  }
  \newcommand{\tuvr}{
  \tikz [
      grow=up,
      level distance=1.0\ml,
      sibling distance=1.0\ml,
      every node/.style={shading=ball, ball color=black, circle, inner sep=0.18\ml}
    ]
    \node[ball color=white] {}
    child{ node[star]{} }
    child{ node[ball color=red, star] {}
    child{ node {}}
    child{ node[ball color=blue] {}
	child{ node[star] {}
    child{ node {}}
	child{ node[ball color=white] {}}}}}
    ;
  }
  \newcommand{\tuvl}{
  \tikz [
      grow=up,
      level distance=1.0\ml,
      sibling distance=1.0\ml,
      every node/.style={shading=ball, ball color=black, circle, inner sep=0.18\ml}
    ]
	\node[star] {}
    child{ node[ball color=blue] {}
    child{ node[ball color=red, star] {}
    child{ node {}}
    child{ node[ball color=white] {}
    child{ node[star] {}}}}}
    child{ node {}}
    child{ node[ball color=white] {}}
    ;
  }
  \newcommand{\trootless}{
    \tikz [
      level distance=1.0\ml,
      sibling distance=1.4\ml,
      every node/.style={shading=ball, ball color=black, circle, inner sep=0.18\ml}
    ]
    \node[ball color=blue] {}
    child[grow=right]{ node[ball color=red, star] {}
    child{ node {} }
    child{ node[ball color=white] {}
    child{ node[star] {} }}}
	child[grow=left]{ node[star] {}
    child{ node {} }
    child{ node[ball color=white] {} }}
    ;
  }
  \newcommand{\maketree}[1]{
    \tikz [
      grow=up, 				
      level distance=1.3\ml,
      sibling distance=2.7\ml,	
      every node/.style={circle, inner sep=0.05\ml},
      baseline=(current bounding box.base)
    ]
	#1
    ;
  }
  \newcommand{\dr}{
	  \rn{black}{$_{q_1}\hspace{0.8\ml}$}
  }
  \newcommand{\dn}[1]{
	  \n{black}{$_{q_#1}\hspace{0.8\ml}$}
  }
  \newcommand{\sr}{
	  \rn{white}{$_{q_1,m_1}$}
  }
  \newcommand{\sn}[1]{
	  \n{white}{$_{q_#1,m_#1}$}
  }
  \newcommand{\n}[2]{
	  node[circle, ball color={#1}, inner sep=0.18\ml, label={[yshift=-0\ml]right:{#2}},label={[yshift=+0\ml]left:{\phantom{#2}}}] {}
  }
  \newcommand{\rn}[2]{
	  \node[circle, ball color={#1}, inner sep=0.18\ml, label={[yshift=-0\ml]right:{#2}},label={[yshift=+0\ml]left:{{\phantom{#2}}}}] {}
  }
  \newcommand{\texk}{
    \maketree{\rn{black}{$_{2}$}
    child{\n{white}{$_{3,2}$}
    child[grow=45,level distance=1.85\ml]{\n{white}{$_{1,2}$}}}
    child{\n{white}{$_{2,1}$}
    child{\n{black}{$_{1}$}}
    child{\n{black}{$_{1}$}}}
  }}
\title[General order conditions for SPRK methods]{General order conditions for stochastic partitioned Runge--Kutta methods}
\author{Sverre Anmarkrud}
\address{Faculty of Environmental Sciences and Natural Resource Management, Norwegian University of Life Sciences, 1433 {\AA}s, Norway}
\address{Department of Mathematics and Computer Science, University of Southern Denmark, 5230 Odense M, Denmark}
\address{Department of Mathematical Sciences, Norwegian University of Science and Technology,
7491 Trondheim, Norway}
\author{Kristian Debrabant}
\author{Anne Kv{\ae}rn{\o}}
\newcommand{\partitionnum}{Q}
\newcommand{\partitionindex}{q}
\newcommand{\diffusionnum}{M}
\newcommand{\diffusionindex}{m}
\newcommand{\equaltreeindex}{R}
\newcommand{\treeindex}{k}
\newcommand{\dop}{\mathrm{d}}
\newcommand{\dW}{\dop W}
\newcommand{\ds}{\dop s}
\newcommand{\dt}{\dop t}
\newcommand{\dX}{\dop X}
\newcommand{\hatTS}{\widehat{TS}}
\DeclareMathOperator{\Sigm}{Sigm}
\begin{document}
\keywords{Stochastic differential equation, partitioned stochastic Runge--Kutta methods, stochastic B-series}

\subjclass{65C30, 60H35, 65C20}

\begin{abstract}
In this paper stochastic partitioned Runge--Kutta (SPRK) methods are considered. A general order
theory for SPRK methods based on stochastic B-series and multicolored, multishaped rooted trees is
developed. The theory is applied to prove the order of some known methods, and it is shown how the
number of order conditions can be reduced in some special cases, especially that the
conditions for preserving quadratic invariants can be used as simplifying assumptions.
\end{abstract}
\maketitle

\section{Introduction}
Whenever the right hand side of an ordinary differential equation (ODE) can be split into different
parts with different features, it might be worth trying to solve the different parts by different
methods. Distinguished examples are stiff/nonstiff parts solved by implicit/explicit methods, fast/slow parts
solved by multirate methods, or partitioned symplectic methods for separable Hamiltonian systems.
The latter seems to have been the aim when partitioned methods for stochastic differential equations (SDEs) are constructed, e.g.\
\cite{milstein02sio, burrage09asd, ma12sca, hong15poq, ma15ssp, holm16sdh, wang17cos}.
Although higher order methods have been constructed, to our knowledge there is still no
complete order theory for stochastic partitioned Runge--Kutta (SPRK) methods. Such an order theory based on
multicolored, multishaped rooted trees is the main contribution in this paper. {The theory is
valid for both It\^o and Stratonovich SDEs\label{rev:21}}.

Constructing high order methods for SDEs is a notoriously nontrivial task. One problem is the required
high order stochastic increments. Another is the huge number of order conditions that have to be
fulfilled. For this reason, most methods are constructed for problems with some special structure,
like linear SDEs, SDEs with additive noise, or separable SDEs. We will demonstrate how the order
theory can be simplified in some of these cases. In particular,
Sanz-Serna and Abia \cite{sanzserna91ocf, abia1993partitioned} have proven that for Runge--Kutta and
partitioned Runge--Kutta methods preserving quadratic invariants only order conditions related to
rootless trees have to be satisfied. A similar result has been proved for stochastic Runge--Kutta
methods in \cite{anmarkrud17ocf}. {This theory depends on the product rule valid for Stratonovich integrals and is therefore only applicable for Stratonovich SDEs}. It will be extended to {Stratonovich} SPRK methods here.

In this paper we consider a system of stochastic partitioned differential equations with
$\partitionnum$ partitions and $\diffusionnum$ diffusion terms,
\begin{equation}
  X^{(\partitionindex)}(t)=x_0^{(\partitionindex)}+\sum_{\diffusionindex=0}^\diffusionnum\int_0^tg^{(\partitionindex)}_{\diffusionindex}(X^{(1)}(s),X^{(2)}(s),\dots,X^{(\partitionnum)}(s))\star
  \dW_\diffusionindex(s), \quad \partitionindex=1,\ldots,\partitionnum \label{eq:psde},
\end{equation}
for which we will also use the abbreviated form
\begin{equation}
  \dop X^{(\partitionindex)}(t)=\sum_{\diffusionindex=0}^\diffusionnum g^{(\partitionindex)}_{\diffusionindex}(X^{(1)}(t),X^{(2)}(t),\dots,X^{(\partitionnum)}(t))\star
  \dW_\diffusionindex(t),
  \quad X^{(q)}(0)=x^{(q)}_0. \label{eq:psde_diff}
\end{equation}
To simplify the notation the deterministic terms are represented by $\diffusionindex=0$, such that $\dW_0(s)=\ds$, while $W_\diffusionindex$, $\diffusionindex=1,\dots,\diffusionnum$, denote one-dimensional and pairwise independent Wiener processes.
The integrals w.r.t.\ the Wiener processes are interpreted as either {It\^o} integrals,
$\star \dW_\diffusionindex(s)=\dW_\diffusionindex(s)$, or Stratonovich integrals, $\star \dW_\diffusionindex(s)=\circ \dW_\diffusionindex(s)$.
We also define the vector of initial values, $x_0 = [x^{(1)}_{0}, x^{(2)}_{0}, \dots, x^{(\partitionnum)}_{0}]$.
Furthermore, we assume that the coefficients $g^{(\partitionindex)}_{\diffusionindex}: \mathbb{R}^{d_1}\times \ldots \times \mathbb{R}^{d_\partitionnum} \to \mathbb{R}^{d_q}$ are sufficiently smooth,
and that the conditions of the existence and uniqueness theorem \cite{oksendal03sde} are satisfied.
The systems are considered to be autonomous. Nonautonomous systems can be included by {extending one of the variables $X^{(\partitionindex)}$, $\partitionindex=1,\dots,\partitionnum$, by $t$}, or by considering the equation $t'=1$ as a separate partitioning.

Denote the numerical approximation of $X^{(\partitionindex)}$ at time $t_n$ by $Y_n^{(\partitionindex)}$.
For the solution of \cref{eq:psde} we consider general, $s$-stage {SPRK} methods given by
\begin{subequations}
	\label{eq:psrk}
	\begin{align}
		H^{(\partitionindex)}_i&=Y^{(\partitionindex)}_{n} + \sum_{\diffusionindex=0}^\diffusionnum\sum_{j=1}^{s}Z^{(\partitionindex,\diffusionindex)}_{i,j}g^{(\partitionindex)}_{\diffusionindex}(H^{(1)}_j,\dots,H^{(\partitionnum)}_j),\qquad i=1,\dots,s,
		\label{eq:psrk_a} \\
		Y^{(\partitionindex)}_{n+1} &= Y^{(\partitionindex)}_{n} +
		\sum_{\diffusionindex=0}^\diffusionnum\sum_{i=1}^s\gamma^{(\partitionindex,\diffusionindex)}_i
		g^{(\partitionindex)}_{\diffusionindex}(H^{(1)}_i,\dots,H^{(\partitionnum)}_i),
		\label{eq:psrk_b}
	\end{align}
\end{subequations}
for partitions $\partitionindex=1,2,\ldots,\partitionnum$. The coefficients $\gamma_i^{(\partitionindex,\diffusionindex)}$ and $Z_{i,j}^{(\partitionindex,\diffusionindex)}$, $i,j=1,\dots,s$, include random variables that depend on the stepsize $h$.
 The random variables used in each step are assumed to be i.i.d., and also $h$ might change from step to step.  To simplify the notation, here and in the following we omit to indicate this by an additional index $n$ on $h$, the random variables used, the coefficients ${\gamma^{(\partitionindex,\diffusionindex)} =
  (}\gamma^{(\partitionindex,\diffusionindex)}_i{)_{i=1}^s}\label{rev:23}$
   and ${Z^{(\partitionindex,\diffusionindex)}=(}Z_{i,j}^{(\partitionindex,\diffusionindex)}{)_{i,j=1}^s}$, and the stage values $H^{(\partitionindex)}_i$. This will in particular also hold for the Wiener increments $\Delta W_\diffusionindex=\int_{t_n}^{t_n+h}\dW_\diffusionindex(s)$.

The coefficients of {an SPRK method} can be gathered in a generalized Butcher tableau. {In the frequently encountered case that there exist matrix functions $Z^{(\partitionindex)}$ and vector functions $\gamma^{(\partitionindex)}$ such that with some vectors of random variables $\xi_m$ it holds that $Z^{(\partitionindex,\diffusionindex)}=Z^{(\partitionindex)}(\xi_\diffusionindex)$ and $\gamma^{(\partitionindex,\diffusionindex)}=\gamma^{(\partitionindex)}(\xi_\diffusionindex)$ for $\diffusionindex=1,\dots,\diffusionnum$ and $\partitionindex=1,\dots,\partitionnum$, we will write the Butcher tableau as follows:}
\[\renewcommand{\arraystretch}{1.5}
  \begin{array}{c|c}
  Z^{(1,0)}& Z^{(1,\diffusionindex)}\\\hline
   \vdots&\vdots \\ \hline
    Z^{(\partitionnum,0)}& Z^{(\partitionnum,\diffusionindex)}
    \\ \hline \hline
    (\gamma^{(1,0)})^{\top}&  (\gamma^{(1,\diffusionindex)})^{\top}\\\hline
   \vdots&\vdots \\ \hline
   (\gamma^{(\partitionnum,0)})^\top & (\gamma^{(\partitionnum,\diffusionindex)})^\top
  \end{array},\qquad \diffusionindex=1,\dots,\diffusionnum.
\]
\begin{remark}
The splitting \eqref{eq:psde} is sometimes called a horizontal splitting. The results are equally
valid for a vertical splitting, that is SDEs split by
\begin{equation}
  \hat{X}(t)=\hat{x}_0+\sum_{\partitionindex=1}^{\partitionnum}
  \sum_{\diffusionindex=0}^\diffusionnum\int_0^t
  \hat{g}^{(\partitionindex)}_{\diffusionindex}
  \left(\hat{X}(s)\right)\star
  \dW_\diffusionindex(s), \quad \partitionindex=1,\ldots,\partitionnum, \label{eq:psde_vert}
\end{equation}
as \cref{eq:psde} can, assuming $d_1=\dots=d_\partitionnum$ (extending the $X^{(\partitionindex)}$
by zero components were necessary), be transformed to \cref{eq:psde_vert} by
\[ \hat{X}(t) = \sum_{\partitionindex=1}^{\partitionnum} X^{(\partitionindex)}(t), \qquad
  g^{(\partitionindex)}_{\diffusionindex}(X^{(1)}(t),X^{(2)}(t),\dots,X^{(\partitionnum)}(t))
  = \hat{g}^{(\partitionindex)}_\diffusionindex\left(\sum_{\partitionindex=1}^{\partitionnum}
  X^{(\partitionindex)}(t) \right).
\]
By summing up the expressions of \cref{eq:psrk} the corresponding SPRK method becomes
\begin{subequations}
	\label{eq:psrk_vert}
	\begin{align}
	  \hat{H}_i &=\hat{Y}_{n} +
	  \sum_{\partitionindex=1}^\partitionnum
	  \sum_{\diffusionindex=0}^\diffusionnum\sum_{j=1}^{s}Z^{(\partitionindex,\diffusionindex)}_{i,j}
	  \hat{g}^{(\partitionindex)}_{\diffusionindex}(\hat{H}_j),\qquad i=1,\dots,s,
		\label{eq:psrk_vert_a} \\
		\hat{Y}_{n+1} &= \hat{Y}_{n} +
\sum_{\partitionindex=1}^{\partitionnum}		
\sum_{\diffusionindex=0}^\diffusionnum\sum_{i=1}^{s}\gamma^{(\partitionindex,\diffusionindex)}_i
		\hat{g}^{(\partitionindex)}_{\diffusionindex}(\hat{H}_i),
		\label{eq:psrk_vert_b}
	\end{align}
\end{subequations}
{where $\hat{Y}_{n}$ is the numerical approximation of $\hat{X}(t_n)$.}
\end{remark}

\begin{example} \label{ex:langevin_sv}
Consider the Langevin equation of motion
\begin{subequations}
	\label{eq:langevin}
\begin{align}
	\dot{r} &= v, \\
	\dot{v} &= f(r,t) - \alpha v + \beta(t),
\end{align}
\end{subequations}
which describes the evolution of a particle with {unit mass}, coordinate $r(t)$ and velocity $v(t)$.
The particle is affected by three forces: $f(r,t)$, a friction force $\alpha v$ for a friction
coefficient $\alpha \geq 0$ and thermal white noise $\beta(t)$. Representing the noise term as $\beta W(t)$ with some constant $\beta$, this can be written as a proper SDE, split vertically
\begin{equation} \label{eq:langevin_vertical}
  \begin{pmatrix}\dop R(t) \\ \dop V(t) \end{pmatrix} =
  \underbrace{\begin{pmatrix} V(t) \\ -\alpha V(t) \end{pmatrix}}_{\hat{g}_0^{(1)}} \dt +
  \underbrace{\begin{pmatrix} 0 \\ f(R(t),t) \end{pmatrix}}_{\hat{g}_0^{(2)}} \dt +
  \underbrace{\begin{pmatrix} 0 \\ \beta \end{pmatrix}}_{\hat{g}_1^{(2)}} \dW,
\end{equation}
or horizontally by
\begin{align*}
  \begin{pmatrix} \dop R(t) \\ \dop U_1(t) \end{pmatrix} & =
  \underbrace{\begin{pmatrix} U_1(t) + U_2(t) \\ -\alpha (U_1(t) + U_2(t)) \end{pmatrix}}_{g_0^{(1)}} \dt, \\
  \dop U_2(t) &= \underbrace{f(R(t),t)}_{g_0^{(2)}}\dt + \underbrace{\beta}_{g_1^{(2)}} \dW(t)
\end{align*}
with $V(t)=U_1(t)+U_2(t)$.

Gr{\o}nbech-Jensen and Farago \cite{gronbech13asa} proposed the following scheme to solve the Langevin equation,
\begin{subequations}
	\label{eq:gronbech_jensen}
	\begin{align}
	  R_{n+1} &= R_n + b h V_n + \frac{b h^2}{2} f(R_n,t_n) + \frac{b h}{2} \beta \Delta W, \\
	  V_{n+1} &= aV_n + \frac{h}{2}(af(R_n,t_n) + f(R_{n+1},t_{n+1})) + b \beta \Delta W,
	\end{align}
\end{subequations}
with
\[
  a = \frac{1-\frac{\alpha h}{2}}{1+\frac{\alpha h}{2}}, \qquad
  b = \frac{1}{1+\frac{\alpha h}{2}}.
\]
This can be reformulated equivalently as {an} SPRK method {with} $\hat{H}_1
  =\begin{pmatrix}
    R_n\\V_n
  \end{pmatrix}$ {and}
\begin{gather*}
  \begin{pmatrix}
    \hat{H}_{2,1}\\
    \hat{H}_{2,2}
  \end{pmatrix}
  =\begin{pmatrix}
R_n\\V_n
  \end{pmatrix}+\frac{h}2\begin{pmatrix}
    \hat{H}_{1,2} + \hat{H}_{2,2}\\
    -\alpha (\hat{H}_{1,2} + \hat{H}_{2,2})
  \end{pmatrix}
  +h\begin{pmatrix}
    0\\
    f(\hat{H}_{1,1},t_n)
  \end{pmatrix}
  + \Delta W \begin{pmatrix} 0 \\ \beta \end{pmatrix}, \\
  \begin{multlined}
  \begin{pmatrix}
    R_{n+1} \\ V_{n+1}
  \end{pmatrix}
  =
  \begin{pmatrix}
    R_{n} \\ V_{n}
  \end{pmatrix} +
  \frac{h}2
  \begin{pmatrix}
    \hat{H}_{1,2} + \hat{H}_{2,2}\\
    -\alpha (\hat{H}_{1,2} + \hat{H}_{2,2})
  \end{pmatrix}
  +\frac{h}{2} \begin{pmatrix}
    0 \\
    f(\hat{H}_{1,1},t_n)+f(\hat{H}_{2,1},t_{n+1})
  \end{pmatrix}\\
  + \Delta W \begin{pmatrix} 0 \\ \beta \end{pmatrix},
  \end{multlined}
\end{gather*}
{where $\hat{H}_{i,j}$ denotes component $j$ of $\hat{H}_i$.}
{
Due to the special structure of the problem, the scheme \eqref{eq:gronbech_jensen} can be interpreted
as an application of several SPRK methods (with {$M=1$}),
of which we mention the following two:
\begin{equation}
  \renewcommand{\arraystretch}{1.5}
  \label{eq:sv}
 {\begin{array}{cc|cc}
    0 & 0 & 0 &  0 \\
    \frac{h}{2} & \frac{h}{2} & 0 & 0  \\
    \hline
    0 & 0 &  0 & 0 \\
    h & 0 &  \Delta W_\diffusionindex & 0 \\
    \hline \hline
    \frac{h}{2} & \frac{h}{2} &  0 & 0 \\
    \hline
    \frac{h}{2} & \frac{h}{2} &   \Delta W_\diffusionindex & 0 \\
  \end{array},} \qquad \qquad
  \begin{array}{cc|ccc}
    0 & 0 &  0 &  0 \\
    \frac{h}{2} & \frac{h}{2} & \frac{\Delta W_\diffusionindex}{2} & \frac{\Delta W_\diffusionindex}{2}  \\
    \hline
    0 & 0 &  0 & 0 \\
    h & 0 &  \Delta W_\diffusionindex & 0 \\
    \hline \hline
    \frac{h}{2} & \frac{h}{2} &   \frac{\Delta W_\diffusionindex}{2} & \frac{\Delta W_\diffusionindex}{2} \\
    \hline
    \frac{h}{2} & \frac{h}{2} &   \frac{\Delta W_\diffusionindex}{2} & \frac{\Delta W_\diffusionindex}{2} \\
  \end{array}, \qquad m=1,\dots,M.
\end{equation}
While the method to the left might be the obvious choice, it is only convergent for partitioned
problems for which there is no noise in the first partitioning. In this particular case it coincides
with the method to the right. In \cref{subsec:SDEswithadditivenoise}   we will prove this method to
be of strong order 1 and weak order 2 when applied to {more general} partitioned SDEs with additive noise. }
\end{example}

In \cref{sec:OrderTheory}, a general order theory for SPRKs is developed. The theory is based on stochastic
B-series and multicolored, multishaped rooted trees.
In \cref{sec:ApplicationOfTheory} two particular cases are studied: SDEs with additive noise and separable
problems. We will here present examples of how the B-series theory can be used to find the order of
some given methods. In \cref{sec:quadraticinvariants}, it is shown how the number of order conditions can be further
reduced in the case of methods preserving quadratic invariants {of  Stratonovich SDEs}\label{rev:22}.

\section{Order theory}\label{sec:OrderTheory}
B-series for deterministic ODEs were introduced by J.\ C.\ Butcher \cite{butcher63cft} in 1963.
B-series for SDEs were developed by Burrage and Burrage  \cite{burrage96hso, burrage00oco, burrage99thesis} for strong convergence of Stratonovich SDEs,
by Komori, Mitsui and Sugiura \cite{komori97rta} and Komori \cite{komori07mcr} for weak convergence of Stratonovich SDEs, and by
R\"{o}{\ss}ler \cite{roessler04ste, roessler06rta} for weak convergence in both the {It\^o} and Stratonovich case.
A unified theory for B-series encompassing both weak and strong convergence for both {It\^o} and Stratonovich SDEs was given in \cite{debrabant08bsa}.
In the following we will generalize this to SPRKs.

Our first goal is to find B-series representations of \cref{eq:psrk}, and we begin by assuming $X^{(\partitionindex)}(h)$ can be written as a B-series $B^{(\partitionindex)}(\phi,x_0;h)$,
\begin{align*}
	B^{(\partitionindex)}(\phi,x_0;h)=\sum_{\tau \in T_\partitionindex}\alpha(\tau)\cdot \phi(\tau)(h)\cdot F(\tau)(x_0),
\end{align*}
where $T_\partitionindex$ is the set of shaped, colored, rooted trees as defined below.
The terms $\alpha(\tau)$ are combinatoric terms. The elementary weight functions $\phi(\tau)(h)$ are stochastic integrals or random variables, and $F(\tau)(x_0)$ are the elementary differentials. {To simplify the presentation, we assume that all elementary differentials exist and all considered B-series converge. Otherwise, one has to consider truncated B-series and discuss the remainder term \cite{roessler04ste}.\label{rev1:1}}
\begin{definition}[Trees and combinatorial coefficients]
	The set of shaped, rooted trees
	\begin{align*}
		T =T_1 \cup T_2 \cup \dots \cup T_\partitionnum
	\end{align*}
	where
	\begin{align*}
		T_\partitionindex &= \{\emptyset_\partitionindex\} \cup T_{\partitionindex,0} \cup T_{\partitionindex,1} \cup \dots \cup T_{\partitionindex,\diffusionnum}
	\end{align*}
	for $\partitionindex=1,\ldots,\partitionnum$ is recursively defined as follows:
	    \begin{romannum}
        \item The graph
			$\bullet_{\partitionindex,\diffusionindex}$ with only one vertex of shape $\partitionindex$ and color $\diffusionindex$ belongs to $T_{\partitionindex,\diffusionindex}$.
		\item If $\tau_1,\tau_2,\dots,\tau_{\kappa} \in T\setminus\{\emptyset_1,\dots,\emptyset_{\partitionnum}\}$, then $[\tau_1,\tau_2,\dots,\tau_{\kappa}]_{\partitionindex,\diffusionindex}\in T_{\partitionindex,\diffusionindex}$, where
 $[\tau_1,\tau_2,\dots,\tau_{\kappa}]_{\partitionindex,\diffusionindex}$ denotes the tree formed by joining the subtrees $\tau_1,\tau_2,\dots,\tau_{\kappa}$ each by a single branch to a common root of shape $\partitionindex$ and color $\diffusionindex$.
 \end{romannum}
Further, we define $\alpha(\tau)$ as
\begin{align*}
		\alpha(\emptyset_\partitionindex) = 1, \; \alpha(\bullet_{\partitionindex,\diffusionindex}) = 1, \; \alpha(\tau=[\tau_1, \dots, \tau_{\kappa}]_{\partitionindex,\diffusionindex}) = \frac{1}{r_1!r_2!\dots r_\equaltreeindex!}\prod_{\treeindex=1}^{\kappa}\alpha(\tau_\treeindex),
\end{align*}
where $r_1,r_2,\dots,r_\equaltreeindex$ count equal trees among $\tau_1,\tau_2,\dots,\tau_{\kappa}$.
\end{definition}

\begin{remark}
  Rooted trees can be represented in the bracket notation, as used in the definition, or
  illustrated as graphs (see \cref{fig:example_tree}). To ease the reading, deterministic nodes
  are in the latter represented as black nodes, with the $m=0$ omitted, while stochastic nodes are
  white. \Cref{fig:example_tree} also gives the corresponding examples for the functions of
  trees that will be defined in the following.
\end{remark}
\begin{figure}[t]
       \begin{minipage}{0.33\linewidth}
	\begin{tabular}{c}
	  $\tau$ \\[1em] \texk \\[2em]
	$[[\bullet_{1,0},\bullet_{1,0}]_{2,1}],[\bullet_{1,2}]_{3,2}]_{2,0}$
	\end{tabular}
      \end{minipage}
      \begin{minipage}{0.64\linewidth}
	\begin{align*}
 	  \alpha(\tau) &= \frac{1}{2} \\
	  F(\tau) &= (D_{2,3}g_{0}^{(2)})((D_{11}g_{1}^{(2)})(g_0^{(1)},g_0^{(1)}),(D_1g_2^{(3)})g_2^{(1)})
	  \\
	  \phi(\tau) &= \int_0^h \left(\int_0^s \! s_1^2 \dW_1(s_1)\right) \left( \int_0^{s} \!
	W_{{2}}(s_2)\dW_{{2}}(s_2) \right) \dop s \\
	  \Phi(\tau) &= \sum_{i,j,k,l,m,n=1}^s
	  \gamma_i^{(2,0)}Z_{i,j}^{(2,1)}Z_{j,k}^{(1,0)}Z_{j,l}^{(1,0)}Z_{i,m}^{(3,2)}Z_{{m},n}^{(1,2)} \\
	  \rho(\tau) &=\frac92
	\end{align*}
      \end{minipage}
\caption{An example of a shaped, colored, rooted tree and its corresponding functions.}
\label{fig:example_tree}
\end{figure}

\begin{definition}[Elementary differentials] For a tree $\tau \in T$ the elementary differential is a mapping $F(\tau)$: $\mathbb{R}^{d_1}\times \ldots \times \mathbb{R}^{d_\partitionnum} \to \mathbb{R}^{d}$ defined recursively by
    \begin{romannum}
		\item $F(\emptyset_\partitionindex)(x_0) = x_{0}^{(\partitionindex)}$, $\emptyset_\partitionindex \in T_\partitionindex$,
		\item $F(\bullet_{\partitionindex,\diffusionindex})(x_0)=g^{(\partitionindex)}_{\diffusionindex}(x_0)$,
		\item If $\tau = [\tau_1,\tau_2,\dots,\tau_{\kappa}]_{\partitionindex,\diffusionindex}\in T_{\partitionindex,\diffusionindex}$, then
            \begin{equation*}
				F(\tau)(x_0)=(D_{\partitionindex_1\dots\partitionindex_\kappa}g^{(\partitionindex)}_{\diffusionindex})(x_0)(F(\tau_1)(x_0),F(\tau_2)(x_0),\dots,F(\tau_{\kappa})(x_0))
            \end{equation*}
            where $\partitionindex_\treeindex$ is the shape of
	    $\tau_\treeindex$, $\treeindex=1,\dots,\kappa$, and
	    $D_{\partitionindex_1\dots\partitionindex_\kappa}=\frac{\partial^{\kappa}}{\partial x^{\partitionindex_1}\dots\partial x^{\partitionindex_\kappa}}$ denotes the derivative operator of order $\kappa$.
    \end{romannum}
\end{definition}
Fundamental for this work is the following lemma which says that if $Y^{(\partitionindex)}(h)$ can be written as a B-series, then $f(Y^{(1)}(h),\dots,Y^{(\partitionnum)}(h))$ can also be written as a B-series.
This is a trivial extension of the lemma found in \cite{debrabant08bsa}.
\begin{lemma}\label{lem:function_of_b-series}
	If $Y^{(\partitionindex)}(h) = B^{(\partitionindex)}(\phi,x_0;h)$, $\partitionindex=1,\dots,\partitionnum$, are some B-series and $f \in C^{\infty}(\mathbb{R}^{d_1}\times \ldots \times \mathbb{R}^{d_\partitionnum}, \mathbb{R}^{d})$, then $f(Y^{(1)}(h),\dots,Y^{(\partitionnum)}(h))$ can be written as a formal series of the form
    \begin{equation}
		f(Y^{(1)}(h),\dots,Y^{(\partitionnum)}(h)) = \sum_{u\in U_f} \beta(u)\cdot \psi_{\phi}(u)(h)\cdot G(u)(x_0),
    \end{equation}
    where
    \begin{romannum}
         \item $U_f$ is a set of trees derived from T, by $\bullet_f \in U_f$, and if $\tau_1,\tau_2,\dots,\tau_{\kappa} \in T\setminus\{\emptyset_1,\dots,\emptyset_{\partitionnum}\}$, then $[\tau_1,\tau_2,\dots,\tau_{\kappa}]_f \in U_f$.
        \item $G(\bullet_f)(x_0)=f(x_0)$ and
	  \[G([\tau_1,\tau_2,\dots,\tau_{\kappa}]_f)(x_0)=({D_{q_1,\dotsc,q_{\kappa}}} f)(x_0)(F(\tau_1)(x_0),\dots,F(\tau_{\kappa})(x_0)).\]
        \item $\beta(\bullet_f)=1$ and \[\beta([\tau_1,\tau_2,\dots,\tau_{\kappa}]_f)=\frac{1}{r_1!r_2!\cdots r_q!}\prod_{\treeindex=1}^{\kappa}\alpha(\tau_\treeindex),\] where $r_1,r_2,\dots,r_q$ count equal trees among $\tau_1,\tau_2,\dots,\tau_{\kappa}$.
		\item $\psi_{\phi}(\bullet_f)(h){=} 1$ and  $\psi_{\phi}([\tau_1,\tau_2,\dots,\tau_{\kappa}]_f)(h)=\prod_{\treeindex=1}^{\kappa}\phi{(\tau_\treeindex)}(h)$.
    \end{romannum}
\end{lemma}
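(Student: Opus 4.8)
The plan is to expand $f$ in a multivariate Taylor series about the base point $x_0=(x_0^{(1)},\dots,x_0^{(\partitionnum)})$ and then substitute the given B-series for each argument, exactly as in the single-argument proof of \cite{debrabant08bsa} but keeping track of which partition each directional derivative acts on. First I would split each argument into its constant part and its non-constant remainder. Adopting the convention $\phi(\emptyset_\partitionindex)(h)=1$, the empty tree contributes $F(\emptyset_\partitionindex)(x_0)=x_0^{(\partitionindex)}$, so that
\[
  Y^{(\partitionindex)}(h)=x_0^{(\partitionindex)}+\Delta^{(\partitionindex)},\qquad
  \Delta^{(\partitionindex)}:=\sum_{\tau\in T_\partitionindex\setminus\{\emptyset_\partitionindex\}}\alpha(\tau)\,\phi(\tau)(h)\,F(\tau)(x_0).
\]

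Next, Taylor-expanding $f$ around $x_0$ and using that its $\kappa$-th total derivative decomposes into the mixed partials $D_{q_1\dots q_\kappa}$ according to the partition in which each increment is taken, I obtain
\[
  f(Y^{(1)},\dots,Y^{(\partitionnum)})=f(x_0)+\sum_{\kappa\geq 1}\frac{1}{\kappa!}\sum_{q_1,\dots,q_\kappa=1}^{\partitionnum}(D_{q_1\dots q_\kappa}f)(x_0)\bigl(\Delta^{(q_1)},\dots,\Delta^{(q_\kappa)}\bigr).
\]
Substituting the series for each $\Delta^{(q_i)}$ and invoking the multilinearity of $D_{q_1\dots q_\kappa}f$ turns this into a sum over ordered tuples $(\tau_1,\dots,\tau_\kappa)$ of nonempty trees, with $\tau_i$ of shape $q_i$; each tuple carries the scalar weight $\prod_{\treeindex}\alpha(\tau_\treeindex)\phi(\tau_\treeindex)(h)$ and the elementary differential $(D_{q_1\dots q_\kappa}f)(x_0)(F(\tau_1),\dots,F(\tau_\kappa))$. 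Since each tree lies in a unique $T_\partitionindex$ (its root shape determines $q_i$), the outer sum over $q_1,\dots,q_\kappa$ is absorbed into the sum over tuples.

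Finally I would pass from ordered tuples to the unordered bracket $[\tau_1,\dots,\tau_\kappa]_f\in U_f$. By the symmetry of $D_{q_1\dots q_\kappa}f$ in its arguments (equality of mixed partials) and of the product $\prod_{\treeindex}\phi(\tau_\treeindex)(h)$, all $\kappa!/(r_1!\cdots r_\equaltreeindex!)$ orderings of a fixed multiset produce identical terms, where $r_1,\dots,r_\equaltreeindex$ are the multiplicities of the distinct trees. Hence the factor $1/\kappa!$ collapses to $1/(r_1!\cdots r_\equaltreeindex!)$, yielding $\beta(u)$, $G(u)$ and $\psi_\phi(u)$ exactly as in (i)--(iv), with $\bullet_f$ supplying the $\kappa=0$ term $f(x_0)$. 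I expect the only delicate point to be this combinatorial collapse — verifying that the multinomial count of orderings cancels the Taylor factorial to leave precisely $1/(r_1!\cdots r_\equaltreeindex!)$ — together with the bookkeeping convention $\phi(\emptyset_\partitionindex)(h)=1$ that lets the empty trees drop out cleanly; everything else is the formal (convergence assumed per the earlier remark) multilinear algebra carried over essentially verbatim from the unpartitioned case.
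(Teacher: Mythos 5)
Your proof is correct and is exactly the argument the paper has in mind: the paper offers no proof of its own, deferring to \cite{debrabant08bsa} with the remark that the extension is trivial, and the cited lemma's proof is precisely this Taylor expansion about $x_0$ with the multinomial collapse of $\frac{1}{\kappa!}$ against the $\kappa!/(r_1!\cdots r_R!)$ equal orderings. Your version just carries the partition bookkeeping through (the mixed partials $D_{q_1\dots q_\kappa}$ and the fact that each nonempty tree's root shape fixes its argument slot), which is the intended ``trivial extension,'' so there is nothing to add.
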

If we apply \cref{lem:function_of_b-series} to the functions $g^{(\partitionindex)}_{\diffusionindex}$ in \cref{eq:psde} we get
\begin{align*}
	g^{(\partitionindex)}_{\diffusionindex}(X^{(1)}(h),\dots,X^{(\partitionnum)}(h))=\sum_{u \in U_{g^{(\partitionindex)}_{\diffusionindex}}}\beta(u)\cdot \psi_{\phi}(u)(h)\cdot G(u)(x_0).
\end{align*}
By the definitions of trees, $T_{\partitionindex,\diffusionindex}$, and elementary differentials, $F(\tau)(x_0)$, we can write this as
\begin{align}
	\label{eq:part_funs}
	g^{(\partitionindex)}_{\diffusionindex}(X^{(1)}(h),\dots,X^{(\partitionnum)}(h))=\sum_{\tau \in T_{\partitionindex,\diffusionindex}}\alpha(\tau) \cdot \phi'_{\partitionindex,\diffusionindex}(\tau)(h)\cdot F(\tau)(x_0),
\end{align}
where
\begin{align*}
	\phi_{\partitionindex,\diffusionindex}'(\tau)(h) &= \begin{cases} 1,& \text{ if } \tau = \bullet_{\partitionindex,\diffusionindex},\\
		\prod_{\treeindex=1}^\kappa \phi(\tau_\treeindex)(h),& \text{ if } \tau = [\tau_1,\dots,\tau_\kappa ]_{\partitionindex,\diffusionindex} \in T_{\partitionindex,\diffusionindex}.\end{cases}
\end{align*}

We now write the exact solutions {of} \cref{eq:psde} as B-series and use \cref{eq:part_funs} to obtain
\begin{multline*}
	\sum_{\tau \in T_\partitionindex} \alpha(\tau)\cdot \phi(\tau)(h) \cdot F(\tau)(x_0) = \\
	x_{0}^{(\partitionindex)} + \sum_{\diffusionindex=0}^\diffusionnum\sum_{\tau \in T_{\partitionindex,\diffusionindex}} \alpha(\tau)\cdot \int_0^h\phi_{\partitionindex,\diffusionindex}'(\tau)(s)\star \dW_\diffusionindex(s)F(\tau)(x_0).
\end{multline*}
Comparing term by term we see that
\begin{multline*}
	\phi(\emptyset_\partitionindex)(h){=} 1, \text{ and } \phi(\tau)(h)=\int_0^h\phi_{\partitionindex,\diffusionindex}'(\tau)(s) \star \dW_\diffusionindex(s) \\
	\text{ for all } \tau \in T_{\partitionindex,\diffusionindex}, \; \diffusionindex=0,1,\dots,\diffusionnum,  \partitionindex=1,\dots,\partitionnum.
\end{multline*}
With induction on {the height of} $\tau$ we have proven the following theorem.
\begin{theorem}The exact solutions $X^{(\partitionindex)}({h})$ of \cref{eq:psde}, {$\partitionindex=1,\dots,\partitionnum$}, can be written as B-series $B^{(\partitionindex)}(\phi,x_0;h)$ with
	\begin{align*}
		&\phi(\emptyset_\partitionindex)(h){=} 1, \quad \phi(\bullet_{\partitionindex,\diffusionindex})(h)={\Delta} W_\diffusionindex(h), \\
		\phi([\tau_1,\tau_2,&\dots,\tau_\kappa]_{\partitionindex,\diffusionindex})(h)=\int_0^h\prod_{j=1}^\kappa\phi(\tau_j(s)) \star \dW_\diffusionindex(s), \\
		&\text{for all }[\tau_1,\tau_2,\dots,\tau_\kappa]_{\partitionindex,\diffusionindex} \in T_{\partitionindex,\diffusionindex}, \;  \partitionindex=1,\dots,\partitionnum, \; \diffusionindex=0,1,\dots,\diffusionnum.
	\end{align*}
	\label{thm:part_exact}
\end{theorem}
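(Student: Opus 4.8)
The plan is to read the three displayed identities as an \emph{ansatz}: posit that each exact solution admits a B-series $B^{(\partitionindex)}(\phi,x_0;h)=\sum_{\tau\in T_\partitionindex}\alpha(\tau)\cdot\phi(\tau)(h)\cdot F(\tau)(x_0)$ with weight functionals $\phi$ yet to be determined, substitute this ansatz into the integral form \cref{eq:psde}, and read off a recursion for $\phi$ that both pins the weights down uniquely and matches the claimed closed form. The only ingredients I need are the recursive definitions of $T_\partitionindex$, $\alpha$ and $F$, together with \cref{lem:function_of_b-series}; existence and convergence of the series are taken for granted under the standing assumption of the paper.

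First I would rewrite the integrand. Each component $X^{(\partitionindex)}(h)$ is a B-series, so \cref{lem:function_of_b-series} applies to the smooth function $g^{(\partitionindex)}_{\diffusionindex}$ and expresses $g^{(\partitionindex)}_{\diffusionindex}(X^{(1)}(h),\dots,X^{(\partitionnum)}(h))$ as a formal series over the derived set $U_{g^{(\partitionindex)}_{\diffusionindex}}$. The key bookkeeping observation is that attaching the subtrees $\tau_1,\dots,\tau_\kappa$ to the root $\bullet_f$ with $f=g^{(\partitionindex)}_{\diffusionindex}$ is exactly the operation that builds $[\tau_1,\dots,\tau_\kappa]_{\partitionindex,\diffusionindex}\in T_{\partitionindex,\diffusionindex}$; under this identification $G$ becomes $F$, the product weight $\psi_\phi$ becomes $\phi'_{\partitionindex,\diffusionindex}$, and --- crucially --- the lemma's coefficient $\beta$ coincides with $\alpha$ because both are defined by the same rule $\tfrac{1}{r_1!\cdots r_\equaltreeindex!}\prod_{\treeindex=1}^{\kappa}\alpha(\tau_\treeindex)$. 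This is precisely \cref{eq:part_funs}.

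Next I would insert \cref{eq:part_funs} into $X^{(\partitionindex)}(h)=x_0^{(\partitionindex)}+\sum_{\diffusionindex=0}^{\diffusionnum}\int_0^h g^{(\partitionindex)}_{\diffusionindex}(\dots)(s)\star\dW_\diffusionindex(s)$. Since the elementary differentials $F(\tau)(x_0)$ are constant in $s$, integration acts only on the scalar weights, turning $\phi'_{\partitionindex,\diffusionindex}(\tau)(s)$ into $\int_0^h\phi'_{\partitionindex,\diffusionindex}(\tau)(s)\star\dW_\diffusionindex(s)$. Matching this against the ansatz term by term is the step I expect to be the crux: it requires that distinct trees carry formally independent elementary differentials, so that equality of the two sums forces equality coefficient by coefficient. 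I would justify this exactly as in the deterministic theory, working in the space of formal series indexed by trees and noting that every nonempty $\tau\in T_\partitionindex$ has a unique decomposition $[\tau_1,\dots,\tau_\kappa]_{\partitionindex,\diffusionindex}$; hence, as $\diffusionindex$ ranges over $0,\dots,\diffusionnum$ and $\tau$ over $T_{\partitionindex,\diffusionindex}$, the right-hand side enumerates $T_\partitionindex\setminus\{\emptyset_\partitionindex\}$ exactly once, with the \emph{same} factor $\alpha(\tau)$ appearing on both sides and cancelling. This yields $\phi(\emptyset_\partitionindex)(h)=1$ from the $x_0^{(\partitionindex)}$ term and $\phi(\tau)(h)=\int_0^h\phi'_{\partitionindex,\diffusionindex}(\tau)(s)\star\dW_\diffusionindex(s)$ for every $\tau\in T_{\partitionindex,\diffusionindex}$.

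Finally I would close the recursion by induction on the height of $\tau$: the value $\phi(\tau)$ for a tree of height $\ell$ is expressed through the $\phi(\tau_\treeindex)$ of its subtrees, all of strictly smaller height, so the recursion determines $\phi$ on all of $T_\partitionindex$ and confirms that the ansatz is consistent. Unwinding $\phi'_{\partitionindex,\diffusionindex}$ then produces the stated formulas: for $\tau=\bullet_{\partitionindex,\diffusionindex}$ one has $\phi'_{\partitionindex,\diffusionindex}\equiv1$, whence $\phi(\bullet_{\partitionindex,\diffusionindex})(h)=\int_0^h 1\star\dW_\diffusionindex(s)=\Delta W_\diffusionindex(h)$, and for $\tau=[\tau_1,\dots,\tau_\kappa]_{\partitionindex,\diffusionindex}$ the product form of $\phi'_{\partitionindex,\diffusionindex}$ gives $\phi(\tau)(h)=\int_0^h\prod_{j=1}^{\kappa}\phi(\tau_j)(s)\star\dW_\diffusionindex(s)$.
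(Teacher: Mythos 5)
Your proposal follows essentially the same route as the paper's own proof: make the B-series ansatz for $X^{(\partitionindex)}(h)$, apply \cref{lem:function_of_b-series} to $g^{(\partitionindex)}_{\diffusionindex}$ and identify the derived trees with $T_{\partitionindex,\diffusionindex}$ to get \cref{eq:part_funs}, substitute into the integral equation \cref{eq:psde}, compare term by term, and close the recursion by induction on the height of the tree. The extra care you devote to justifying the term-by-term comparison (unique decomposition of each nonempty tree, formal independence of the elementary differentials) is a welcome elaboration of a step the paper passes over silently, but it does not change the argument.
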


A similar result can be found for the numerical solution of \cref{eq:psde} by the $s$-stage SPRK method \eqref{eq:psrk}.
\begin{theorem}
	\label{thm:part_srk_b-series}
	The numerical solutions $Y^{(\partitionindex)}_{1}$ as well as the stage values can be written in terms of B-series
	\begin{align*}
		H^{(\partitionindex)}_i&=B^{(\partitionindex)}(\Psi_i,x_0;h), \quad Y^{(\partitionindex)}_{1}=B^{(\partitionindex)}(\Phi,x_0;h)
	\end{align*}
	for all $i=1,\dots,s$, $\partitionindex=1,\ldots,\partitionnum$, with
	\begin{subequations}
		\label{eq:num_b_h}
		\begin{align}
			\Psi_i(\emptyset_\partitionindex)(h) &= 1, \quad \Psi_i(\bullet_{\partitionindex,\diffusionindex})(h)=\sum_{j=1}^sZ^{(\partitionindex,\diffusionindex)}_{i,j}, \label{eq:num_b_h_a} \\
			\Psi_i([\tau_1,&\dots,\tau_\kappa]_{\partitionindex,\diffusionindex})(h)=\sum_{j=1}^sZ^{(\partitionindex,\diffusionindex)}_{i,j}\prod_{\treeindex=1}^{\kappa}\Psi_j(\tau_\treeindex)(h) \label{eq:num_b_h_b}
		\end{align}
	\end{subequations}
and
\begin{subequations}
	\label{eq:num_b_y}
	\begin{align}
		\Phi(\emptyset_\partitionindex)(h)&= 1, \quad \Phi(\bullet_{\partitionindex,\diffusionindex})(h)=\sum_{i=1}^{s}\gamma^{(\partitionindex,\diffusionindex)}_i\label{eq:num_b_y_a},\\
		\Phi(\tau=[\tau_1,&\dots,\tau_\kappa]_{\partitionindex,\diffusionindex})(h)=\sum_{i=1}^{s}\gamma^{(\partitionindex,\diffusionindex)}_i\prod_{\treeindex=1}^{\kappa}\Psi_i(\tau_\treeindex)(h). \label{eq:num_b_y_b}
	\end{align}
\end{subequations}

\end{theorem}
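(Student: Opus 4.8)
The plan is to mirror the derivation that led to \cref{thm:part_exact}, replacing the integral recursion of the exact flow by the algebraic sums coming from the coefficients $Z^{(\partitionindex,\diffusionindex)}_{i,j}$ and $\gamma^{(\partitionindex,\diffusionindex)}_i$. Since the method is autonomous it suffices to treat a single step and set $n=0$, so that $Y^{(\partitionindex)}_0=x_0^{(\partitionindex)}=F(\emptyset_\partitionindex)(x_0)$ serves as the base of the expansion.

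First I would make the ansatz that each stage value admits a formal B-series, $H^{(\partitionindex)}_j=B^{(\partitionindex)}(\Psi_j,x_0;h)$, with as yet undetermined weights $\Psi_j(\tau)(h)$. Applying \cref{lem:function_of_b-series} to the composite functions $g^{(\partitionindex)}_{\diffusionindex}(H^{(1)}_j,\dots,H^{(\partitionnum)}_j)$ and regrouping exactly as in the passage from \cref{lem:function_of_b-series} to \eqref{eq:part_funs}, one obtains
\[ g^{(\partitionindex)}_{\diffusionindex}(H^{(1)}_j,\dots,H^{(\partitionnum)}_j)=\sum_{\tau\in T_{\partitionindex,\diffusionindex}}\alpha(\tau)\cdot\Psi_j'(\tau)(h)\cdot F(\tau)(x_0), \]
where $\Psi_j'(\tau)(h)=1$ if $\tau=\bullet_{\partitionindex,\diffusionindex}$ and $\Psi_j'(\tau)(h)=\prod_{\treeindex=1}^\kappa\Psi_j(\tau_\treeindex)(h)$ if $\tau=[\tau_1,\dots,\tau_\kappa]_{\partitionindex,\diffusionindex}$. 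Substituting this into the stage equation \eqref{eq:psrk_a}, using $Y^{(\partitionindex)}_0=F(\emptyset_\partitionindex)(x_0)$ for the constant term, and comparing the coefficients of each elementary differential $F(\tau)(x_0)$ (treating the trees as a formal basis, as is standard in the B-series calculus) then yields precisely the recursion \eqref{eq:num_b_h}: the empty tree forces $\Psi_i(\emptyset_\partitionindex)(h)=1$; a single node $\bullet_{\partitionindex,\diffusionindex}$ forces $\Psi_i(\bullet_{\partitionindex,\diffusionindex})(h)=\sum_j Z^{(\partitionindex,\diffusionindex)}_{i,j}$; and a general $\tau=[\tau_1,\dots,\tau_\kappa]_{\partitionindex,\diffusionindex}$ forces $\Psi_i(\tau)(h)=\sum_j Z^{(\partitionindex,\diffusionindex)}_{i,j}\prod_{\treeindex=1}^\kappa\Psi_j(\tau_\treeindex)(h)$.

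The step that needs care, and which I expect to be the main obstacle, is justifying that the ansatz is legitimate, because the stage equations are in general implicit: the stage values appear on both sides of \eqref{eq:psrk_a}, and nothing prevents $Z^{(\partitionindex,\diffusionindex)}_{i,j}$ from being nonzero for $j\geq i$. The resolution is that the recursion \eqref{eq:num_b_h} expresses $\Psi_i(\tau)(h)$ solely in terms of the weights $\Psi_j(\tau_\treeindex)(h)$ of the subtrees $\tau_\treeindex$, each of which has strictly smaller height than $\tau$. Hence I would \emph{define} the $\Psi_i$ by induction on the height of $\tau$; this produces a well-defined formal B-series independently of the implicitness, and by construction this series satisfies \eqref{eq:psrk_a} order by order. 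This is where the induction on tree height—already invoked for \cref{thm:part_exact}—does the real work.

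Finally, inserting the now-established stage B-series into the update formula \eqref{eq:psrk_b}, reusing the same expansion of $g^{(\partitionindex)}_{\diffusionindex}(H^{(1)}_i,\dots,H^{(\partitionnum)}_i)$ and again matching the coefficients of $F(\tau)(x_0)$, gives the recursion \eqref{eq:num_b_y} for $\Phi$, namely $\Phi(\emptyset_\partitionindex)(h)=1$, $\Phi(\bullet_{\partitionindex,\diffusionindex})(h)=\sum_i\gamma^{(\partitionindex,\diffusionindex)}_i$ and $\Phi([\tau_1,\dots,\tau_\kappa]_{\partitionindex,\diffusionindex})(h)=\sum_i\gamma^{(\partitionindex,\diffusionindex)}_i\prod_{\treeindex=1}^\kappa\Psi_i(\tau_\treeindex)(h)$. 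Since $Y^{(\partitionindex)}_1$ is obtained explicitly from the stages, no further implicitness arises at this last stage.
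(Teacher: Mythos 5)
Your proposal is correct and follows essentially the same route as the paper: make the B-series ansatz for the stage values, expand $g^{(\partitionindex)}_{\diffusionindex}$ of those B-series via \cref{lem:function_of_b-series} (as in \cref{eq:part_funs}), insert into \cref{eq:psrk_a,eq:psrk_b}, and compare coefficients of the elementary differentials to read off \cref{eq:num_b_h,eq:num_b_y}. Your explicit justification of the implicit stage equations by induction on tree height is a point the paper leaves tacit (it invokes the same induction only for \cref{thm:part_exact}), but it is the same argument, just spelled out more carefully.
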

\qquad \begin{proof}
	Write $H_i^{(\partitionindex)}$ as B-series,
\begin{align*}
	H^{(\partitionindex)}_i&=\sum_{\tau \in T_\partitionindex}\alpha(\tau)\Psi_i(\tau)(h)F(\tau)(x_0),
\end{align*}
for $i=1,\dots,s$, $\partitionindex=1,\dots,\partitionnum$, where as usual the product of vectors is understood componentwise.
Inserted into \cref{eq:psrk_a} and using \cref{eq:part_funs} this gives
\begin{align*}
	H_i^{(\partitionindex)}&= x_{0}^{(q)}+\sum_{\diffusionindex=0}^\diffusionnum\sum_{j=1}^{s}\sum_{\tau \in T_{\partitionindex,\diffusionindex}}\alpha(\tau)\left(Z^{(\partitionindex,\diffusionindex)}_{i,j}\cdot \Psi_{j,\partitionindex,\diffusionindex}'(\tau)(h)\right)F(\tau)(x_0).
\end{align*}
A term by term comparison yields \cref{eq:num_b_h}. The proof of \cref{eq:num_b_y} is similar.
\qquad \end{proof}

The local order of accuracy of the SPRK method can now be decided by comparing the B-series of the exact and the numerical solution. {\label{rev:11}Applying a time transformation, it is hereby sufficient to consider B-series expansions around $t=0$, as done in \cref{thm:part_exact,thm:part_srk_b-series}.} First, we need to define the tree order.
{
\begin{definition}\label{def:treeorder}
	The order of a tree $\tau \in T$ is defined by
	\begin{gather*}
		\rho(\emptyset_\partitionindex)=0,\qquad \rho(\tau=[\tau_1,\dots,\tau_\kappa]_{\partitionindex,\diffusionindex})=\sum_{\treeindex=1}^\kappa
		\rho(\tau_\treeindex)+ \begin{cases} 1  \text{   for } \diffusionindex=0, \\
		  \frac{1}{2} \text{   otherwise{.}}\end{cases}
	\end{gather*}
\end{definition}}
The following theorem relates the global order of accuracy to the local order. Here, we assume that
method \eqref{eq:psrk} is constructed such that $\Phi(\tau)(h) = \mathcal{O}(h^{\rho(\tau)})$ for all $\tau\in T$ for mean square convergence, respectively
$
E\prod_{k=1}^{\kappa} \Phi(\tau_k) = \mathcal{O}(h^{\sum_{k=1}^{\kappa}\rho(\tau_k)})$ for all $\tau_1,\dots,\tau_\kappa\in T$, $\kappa\in\mathbb{N}$, for weak convergence.
\begin{theorem} \label{thm:milstein}
  The method has mean square global order $p$ if
  \begin{subequations} \label{eq:msord}
    \begin{align}
      \Phi(\tau)(h) &= {\phi}(\tau)(h) +
      \mathcal{O}(h^{p+\frac{1}{2}}), & \forall \tau & \in T \text{ with }
      \rho(\tau)\leq p, \label{eq:msorda} \\
      E\Phi(\tau)(h) &= E {\phi}(\tau)(h) + \mathcal{O}(h^{p+1}), &\forall \tau
      & \in T \text{ with }
      \rho(\tau)\leq p+\frac{1}{2}, \label{eq:msordb}
    \end{align}
  \end{subequations}
  and weak consistency of order $p$ if and only if
  \begin{equation}
    E\prod_{k=1}^{\kappa} \Phi(\tau_k) = E\prod_{k=1}^{\kappa} {\phi}(\tau_k)
    + \mathcal{O}(h^{p+1})
    \quad \text{whenever}\quad \sum_{k=1}^{\kappa}\rho(\tau_k)\leq p+\frac{1}{2}.
    \label{eq:weakord}
  \end{equation}
\end{theorem}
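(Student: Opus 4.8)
The plan is to reduce both claims to the one-step (local) error estimates required by the classical fundamental convergence theorems, using the B-series representation of the local error. By \cref{thm:part_exact,thm:part_srk_b-series} this error is
\[
  X^{(\partitionindex)}(h)-Y^{(\partitionindex)}_{1}=\sum_{\tau\in T_\partitionindex}\alpha(\tau)\bigl(\phi(\tau)(h)-\Phi(\tau)(h)\bigr)F(\tau)(x_0),
\]
and everything rests on the moment estimate $\|\phi(\tau)(h)\|_{L^n}=\mathcal{O}(h^{\rho(\tau)})$ for all $n$. First I would establish this by applying the It\^o isometry and the Burkholder--Davis--Gundy inequalities to each integration in the recursion of \cref{thm:part_exact}; a deterministic integration multiplies the $L^n$-norm by $\mathcal{O}(h)$ and a stochastic one by $\mathcal{O}(h^{1/2})$, which is exactly why the order $\rho$ is defined as in \cref{def:treeorder}. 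The same bound holds for $\Phi(\tau)(h)$ by the standing assumption $\Phi(\tau)(h)=\mathcal{O}(h^{\rho(\tau)})$, and since only finitely many trees satisfy $\rho(\tau)\le p+1$, the sums are finite up to the tracked order, the tail being absorbed into the remainder through the assumed convergence of the B-series.

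For the mean square statement I would verify the two hypotheses of Milstein's fundamental theorem, uniformly in the initial value $x_0$: a local mean error of order $h^{p+1}$ and a local mean-square error of order $h^{p+1/2}$. Splitting the B-series of the local error by tree order, $E[X^{(\partitionindex)}(h)-Y^{(\partitionindex)}_{1}]$ receives $\mathcal{O}(h^{p+1})$ from trees with $\rho(\tau)\le p+\tfrac12$ directly by \cref{eq:msordb}, while trees with $\rho(\tau)\ge p+1$ contribute $\mathcal{O}(h^{\rho(\tau)})=\mathcal{O}(h^{p+1})$ automatically, since $|E\,\phi(\tau)|\le\|\phi(\tau)\|_{L^2}$ and likewise for $\Phi$. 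In the same way the $L^2$ local error gets $\mathcal{O}(h^{p+1/2})$ from trees with $\rho(\tau)\le p$ through \cref{eq:msorda}, and $\mathcal{O}(h^{\rho(\tau)})=\mathcal{O}(h^{p+1/2})$ from trees with $\rho(\tau)\ge p+\tfrac12$ by the moment estimate. With a local mean error of order $p+1$ and a local mean-square error of order $p+\tfrac12$, Milstein's fundamental theorem gives global mean square order $(p+\tfrac12)-\tfrac12=p$.

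For weak consistency the statement is purely local, so no global theorem is needed. Here I would expand a test function $f$ with \cref{lem:function_of_b-series}: both $f$ at the numerical and at the exact solution are B-series whose elementary weight on $u=[\tau_1,\dots,\tau_\kappa]_f$ is $\prod_{\treeindex}\Phi(\tau_\treeindex)(h)$ and $\prod_{\treeindex}\phi(\tau_\treeindex)(h)$, respectively. Taking expectations and differencing, the one-step weak error equals $\sum_u\beta(u)\bigl(E\prod_{\treeindex}\Phi(\tau_\treeindex)-E\prod_{\treeindex}\phi(\tau_\treeindex)\bigr)G(u)(x_0)$. The terms with $\sum_\treeindex\rho(\tau_\treeindex)\le p+\tfrac12$ are $\mathcal{O}(h^{p+1})$ precisely by \cref{eq:weakord}, and by H\"older's inequality together with the moment estimate the remaining terms are $\mathcal{O}(h^{\sum_\treeindex\rho(\tau_\treeindex)})=\mathcal{O}(h^{p+1})$; hence \cref{eq:weakord} implies a one-step weak error of order $h^{p+1}$, i.e.\ weak consistency of order $p$. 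For the converse I would, over a sufficiently rich class of coefficients $g^{(\partitionindex)}_{\diffusionindex}$, choose these so that the elementary differentials $G(u)(x_0)$ are linearly independent and choose $f$ so as to isolate a single weight, forcing each instance of \cref{eq:weakord} separately.

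The step I expect to be the main obstacle is the necessity direction of the weak ``if and only if'': one must show that, over the admissible class of problems, no linear combination of the elementary differentials can cancel, so that a single violated condition in \cref{eq:weakord} cannot be hidden in the weak error. The moment estimates and the mean-square half are routine once the B-series machinery of \cref{thm:part_exact,thm:part_srk_b-series} is in place; making the independence argument precise, and pinning down the class of problems over which weak consistency is required, is the delicate point.
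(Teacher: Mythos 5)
Your proposal is sound and follows essentially the same route as the paper's treatment: the paper does not actually prove \cref{thm:milstein} but refers for \eqref{eq:msord} to Burrage--Burrage and for \eqref{eq:weakord} to Milstein's fundamental theorem (with details in Debrabant--Kv{\ae}rn{\o}), and those sources argue exactly as you do --- tree-order splitting of the B-series local error, moment estimates $\|\phi(\tau)(h)\|_{L^2}=\mathcal{O}(h^{\rho(\tau)})$, Milstein's mean-square fundamental theorem with $p_1=p+1$, $p_2=p+\tfrac12$, and the B-series expansion of a test function via \cref{lem:function_of_b-series} for the weak part, all under the standing assumption on $\Phi$ stated just before the theorem. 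The one step you leave as a plan, the necessity direction of \eqref{eq:weakord} via linear independence of the elementary differentials over the admissible problem class, is precisely the part the paper also delegates to the cited literature, so your sketch contains no conflict with, and no genuine gap relative to, the paper's own (citation-based) proof.
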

Here, the $\mathcal{O}(\cdot)$-notation refers to $h\to0$ and, especially in \cref{eq:msorda}, to the $L^2$-norm.
The result \eqref{eq:msord} was first proved in \cite{burrage00oco}, while \cref{eq:weakord} is a consequence of a result of Milstein \cite{milstein95nio}, see \cite{debrabant08bsa}
for details. A list of all trees with $\rho(\tau)\leq 2$ and their corresponding functions are given
in \cref{appendix}.
\section{Two special cases}\label{sec:ApplicationOfTheory}
The amount of order conditions to be satisfied for higher order methods is quite overwhelming.
E.\,g.\ for a method of strong order 1.5 with $Q=2$ and scalar noise, order conditions for {122}
different trees need to be satisfied (some of them being trivially fulfilled, though).

The virtue of  partitioned methods becomes clear when applied to problems for which some underlying
structure can be exploited, in the sense that many elementary differentials will be zero, and the
corresponding trees can thus be ignored in the B-series. The main task is to identify those trees.
In this subsection, the idea will be demonstrated with two distinguished examples from Milstein et.\ al
\cite{milstein02nmf,milstein02sio,milstein04snf}, SDEs with additive noise and
separable systems, both with two partitionings.

\subsection{SDEs with additive noise}\label{subsec:SDEswithadditivenoise}

\begin{table}[t]
  \caption{Relevant trees up to order two and corresponding functions for two-partitioned SDEs with additive noise
    \eqref{eq:sde_additive_noise}. Here, $\partitionindex_1,\partitionindex_2,\partitionindex_3\in\{1,2\}$, while $\tilde{\partitionindex}_1\in\{1,2,3\}$. The weights $\Phi(\tau)$ correspond to method \eqref{eq:sv} extended by $Z^{(3,0)}=Z^{(2,0)}$ and
$\gamma^{(3,0)} = \gamma^{(2,0)}$.
  }
  \label{tab:additive}
  \[
  \renewcommand{\arraystretch}{1.5}
  \begin{array}{c|@{}c|c|c|c} \hline
   \!\! \text{No} \!\!&  \tau & \!\rho(\tau)\! & \phi(\tau) & \Phi(\tau)  \\ \hline
   1 & \maketree{\rn{white}{$_{q_1,m_{{1}}}$}}   &\frac{1}{2} & \Delta W_{m_{{1}}} & \Delta W_{m_{{1}}}   \\
   2 &  \maketree{\rn{black}{$_{\tilde{q}_1}$}}   & 1 & h &  h   \\
   6 & \maketree{\rn{white}{$_{q_1,m_{{1}}}$} child{\n{black}{$_3$}}}   & \frac{3}{2} & \int\limits_0^h s\dW_{m_{{1}}}(s) & \frac{1}{2} h \Delta W_{m_{{1}}} \\
   7 &\maketree{\rn{black}{$_{q_1}$} child{\n{white}{$_{q_2,m_{{2}}}$}}}   & \frac{3}{2} & \int\limits_0^h W_{m_{{2}}}(s) \dop s & \frac{1}{2} h\Delta W_{m_{{2}}} \\
   8 &\maketree{\rn{black}{$_{q_1}$} child{\n{black}{$_{\tilde{q}_1}$}}} & 2 & \frac{1}{2} h^2 &
    \frac{1}{2}h^2 \\
   9 & \maketree{\rn{black}{$_{q_1}$} child{\n{white}{$_{q_3,m_3}$}} child{\n{white}{$_{q_2,m_2}$}}}
   & 2 & \int\limits_0^h W_{m_2}(s)W_{m_3}(s)\dop s & \frac{1}{2} h \Delta W_{m_2} \Delta W_{m_3}
       \\ \hline
  \end{array}
\]
\end{table}
We consider partitioned problems with $Q=2$ with additive noise, thus
\begin{subequations}
  \begin{align}
    \dop X^{(1)} &= g^{(1)}_0(X^{(1)},X^{(2)},t)\dt + \sum_{m=1}^\diffusionnum g^{(1)}_m(t) \dW_m(t), \\
    \dop X^{(2)} &= g^{(2)}_0(X^{(1)},X^{(2)},t)\dt + \sum_{m=1}^\diffusionnum g^{(2)}_m(t) \dW_m(t).
  \end{align}
  \label{eq:sde_additive_noise}
\end{subequations}
To deal with the nonautonomous case, let us include a third partition:
\[ \dop X^{(3)} = 1\dt. \]
The problem structure induces that many elementary differentials will be zero, and the corresponding trees can thus be ignored in the B-series. Restricting to nonvanishing elementary differentials, we only need to consider trees characterized by the following properties:
\smallskip
\begin{itemize}
  \item There are no nodes $\bullet_{3,m}$ for $m\not= 0$, since the corresponding $g^{(3)}_m=0$.
  \item Nodes $\bullet_{3,0}$ have no branches, since $g^{(3)}_0$ is constant.
  \item The stochastic nodes $\bullet_{q,m}$ with $q=1,2$ and $m\not=0$ can only be followed by one
    or more nodes $\bullet_{3,0}$, since $g^{(q)}_m$ is only time-dependent.
\end{itemize}
\smallskip
Of the trees listed in the appendix, this leaves us with the trees no.\ 1, 2, 6, 7, 8 and 9 and even those
can be simplified since $q_1\not = 3$ for the trees no.\ 6, 7, 8 and 9, and $q_2=3$ for tree no.\ 6. The
trees together with their corresponding functions are listed in
\cref{tab:additive}.

Let the SDE \eqref{eq:sde_additive_noise} be solved by method \eqref{eq:sv}. For the third partition, we use $Z^{(3,0)}=Z^{(2,0)}$ and
$\gamma^{(3,0)} = \gamma^{(2,0)}$. The corresponding weights $\Phi(\tau)$ are given in \cref{tab:additive}. From \cref{thm:milstein} we can conclude that
the method is of strong order 1, as the order conditions for the trees
no.\ 6 and 7 are only fulfilled in expectation, and that it is of weak order 2.

The  Langevin equation \eqref{eq:langevin} of \cref{ex:langevin_sv} is an example of an SDE \eqref{eq:sde_additive_noise},
thus we can conclude that the method proposed in \cite{gronbech13asa} is of strong order 1 and weak order 2.
{A further example of an SDE \eqref{eq:sde_additive_noise} is the following:
\begin{example}[Stochastic version of the Jansen and Rit Neural Mass Model]\label{ex:JansenRitNeuralMassModell}
Let $X_0$, $X_1$ and $X_2$ describe the mean postsynaptic potentials of the main neural population, the excitatory and inhibitory interneurons, respectively. The following model has been proposed in  \cite{ableidinger17asv}:
\begin{align*}
  \dX_i(t) &= X_{i+3}(t),\qquad i\in\{0,1,2\},\\
  \dX_3(t) &=\left[Aa\left[\mu_3(t)+\Sigm(X_1(t)-X_2(t))\right]-2aX_3(t)-a^2X_0(t)\right]+\sigma_3(t)\dW_3(t),\\
  \dX_4(t) &=\left[Aa\left[\mu_4(t)+C_2\Sigm(C_1X_0(t))\right]-2aX_4(t)-a^2X_1(t)\right]+\sigma_4(t)\dW_4(t),\\
  \dX_5(t) &=\left[Bb\left[\mu_5(t)+C_4\Sigm(C_3X_0(t))\right]-2bX_5(t)-b^2X_2(t)\right]+\sigma_5(t)\dW_5(t),
\end{align*}
where $\Sigm(x)=\frac{\nu_{\max}}{1+e^{r(\nu_0-x)}}$, $\mu_i$ and $\sigma_i$, $i\in\{3,4,5\}$, describe external input and the scaling of the stochastic components, respectively, and $A$, $B$, $a$, $b$, $C_i$, $i\in\{1,2,3,4\}$, $r$ and $\nu_0$, $\nu_{\max}$ are parameters.
\end{example}}

\subsection{Separable systems}
Consider the following system, which typically can arise from separable Hamiltonian systems,
discussed e.g.\ in \cite[4.2.2]{milstein04snf}:
\begin{subequations}\label{rev:eq1}
  \begin{align}
    \dop X^{(1)}(t) &= g_0^{(1)}(X^{(2)}(t))\dt +
                 \sum_{m=1}^M g_m^{(1)}(X^{(2)}(t)) \star \dW_m(t), \\
    \dop X^{(2)}(t) &= g_0^{(2)}(X^{(1)}(t)) \dt.
    \end{align}
  \label{eq:separable}
\end{subequations}
The elementary differentials corresponding to the following trees vanish:
\smallskip
\begin{itemize}
  \item All trees for which a node of shape $q$ {($q\in\{1,2\}$)} is followed by a node of the same shape, as $\frac{\partial g^{(q)}_m}{\partial x^{(q)}}=0$ {for $m=0,\dots,M$}.
  \item All trees with nodes $\bullet_{2,m}$ with $m\not=0$: There is no noise in the second
    partition, so $g_m^{(2)}=0$ {for $m=1,\dots,M$}.
\end{itemize}
\smallskip
The remaining trees {$\tau$} with  $\rho(\tau)\leq 2$ are then (for $q_1,q_2\in\{1,2\}$ with $q_1\neq q_2$)
\begin{equation} \label{eq:trees_separable}
  \begin{array}{@{}c|@{}c@{}c@{}c@{}c@{}c@{}c@{}c@{}} \\
    \text{i}&  1 & 2 & 6 & 7 & 8 & 9 & 12 \\ \hline
  \tau_i & \maketree{\rn{white}{$_{1,m_{{1}}}$}}&
  \maketree{\rn{black}{$_{q_1}$}}&
  \maketree{\rn{white}{$_{1,m_{{1}}}$} child{\n{black}{$_2$}}} &
  \maketree{\rn{black}{$_2$} child{\n{white}{$_{1,m_{{2}}}$}}}&
  \maketree{\rn{black}{$_{q_1}$} child{\n{black}{$_{q_2}$}}} &
  \maketree{\rn{black}{$_2$} child{\n{white}{$_{1,m_3}$}} child{\n{white}{$_{1,m_2}$}}}&
  \maketree{\rn{white}{$_{1,m_1}$} child{\n{black}{$_2$} child{\n{white}{$_{1,m_3}$}}}}
\end{array}
\end{equation}
Consider the following method, proposed in
\cite{milstein02sio}:
\begin{equation}
  \renewcommand{\arraystretch}{1.5}
  \begin{array}{cc|cc}
    \frac{h}{4} & 0 & \frac{3J_{(m,0)}}{2h}-\frac{\Delta W_m}{2} & 0 \\
    \frac{h}{4} & \frac{3h}{4} &  \frac{3J_{(m,0)}}{2h}-\frac{\Delta W_m}{2} &
    -\frac{3J_{(m,0)}}{2h}+\frac{3\Delta W_m}{2}
    \\ \hline
    0 & 0 && \\
    \frac{2h}{3} & 0 & & \\ \hline \hline
    \frac{h}{4} & \frac{3h}{4} &  \frac{3J_{(m,0)}}{2h}-\frac{\Delta W_m}{2} &
    -\frac{3J_{(m,0)}}{2h}+\frac{3\Delta W_m}{2}
     \\ \hline
    \frac{2h}{3} & \frac{h}{3} & &
  \end{array}
  \label{eq:milstein_p_oneandahalf}
\end{equation}
where $J_{(m,0)} = \int_{t_n}^{t_n+h} (W_m(s)-W_m(t_n))\ds$.
It is straightforward to show that $\phi(\tau) = \Phi(\tau)$ for {trees $\tau_1,\tau_2,\tau_7,\tau_8$} in
\cref{eq:trees_separable}. Using the fact (see e.\,g.\ \cite{kloeden92nso,debrabant10ste}) that $J_{(m,0)}+J_{(0,m)}=h\Delta W_m$, where
$J_{(0,m)}=\int_{t_n}^{t_n+h}(s-t_n)\star \dW_m(s)$, {\label{rev12}we obtain for $\tau_6$ that (remember that for the consistency analysis it is enough to consider the first step of the method, so $t_n=t_0=0$)
\begin{align*}
\Phi(\tau_6)&= \sum_{i=1}^2 \gamma^{(1,m_1)}_i\sum_{j=1}^2Z^{(2,0)}_{i,j}= \gamma^{(1,m_1)}_2 \sum_{j=1}^2 Z^{(2,0)}_{2,j}\\ &= \left(- \frac{3J_{(m_1,0)}}{2h} + \frac{3\Delta W_{m_1}}{2} \right)\frac{ 2h}3 = - J_{(m_1,0)} + \Delta W_{m_1} h\\
&= J_{(0,m_1)}=\phi(\tau_6).
\end{align*}
}
For the remaining two
trees we get:
\begin{align*}
  \Phi(\tau_9) & = \frac{1}{2}\left(\frac{3}{h}J_{(m_2,0)}J_{(m_3,0)} - \Delta W_{m_2}J_{(m_3,0)} -
  \Delta W_{m_3}J_{(m_2,0)}
+ h\Delta W_{m_2} \Delta W_{m_3} \right), \\
\Phi(\tau_{12}) & = \frac{1}{2}\left(-\frac{3}{h}J_{(m_1,0)}J_{(m_3,0)} + 3\Delta W_{m_1}J_{(m_3,0)} +  \Delta
  W_{m_3}J_{(m_1,0)} - h\Delta W_{m_1} \Delta W_{m_3} \right).
\end{align*}
From \cref{thm:milstein} we can conclude that the method is (remarkably both for Strato\-no\-vich and It\^{o} SDEs) of strong order 1.5, since
\[ E \Phi(\tau_9) = E\phi(\tau_9) = \begin{cases} \frac{h^2}{2} & \text{if } m_2 = m_3 \\
  0 & \text{otherwise} \end{cases}, \qquad E \Phi(\tau_{12}) = E\phi(\tau_{12})= 0, \]
which follows from the following relations:
\[
E \Delta W_m^2 = h, \quad E \Delta W_m J_{(m,0)} = \frac{h^2}{2}, \qquad E J_{(m,0)}^2 = \frac{h^3}{3}.
\]

This is of course in accordance with the order result given (for the Stratonovich case) in \cite[Theorem 4.3]{milstein02nmf}.
\section{Quadratic invariants as simplifying assumptions, rootless trees}\label{sec:quadraticinvariants}
In the previous section, we demonstrated how the special structure of the SDEs can be exploited, in
the sense that the elementary differentials corresponding to certain trees are zero, and the
corresponding conditions for these trees do not need to be fulfilled. In this section, we will see
how for separable Stratonovich SDEs the algebraic relation between the method coefficients that implies the method to preserve
quadratic invariants creates certain equivalence classes of trees  in the sense that only one condition for each class has to be fulfilled. This
is a generalization of a result obtained for deterministic partitioned ODEs by Abia and Sanz-Serna
\cite{abia1993partitioned}. It is similar to the nonpartitioned case, which for ODEs is discussed by
Sanz-Serna and Abia \cite{sanzserna91ocf} and generalized to SDEs in Anmarkrud and Kv{\ae}rn{\o} \cite{anmarkrud17ocf}.
{The conditions for preserving quadratic invariants for non-separable equations with two
  partitionings have been developed by Hong, Xu and Wang
\cite{ma15ssp}\label{rev:24}}, {see also Ma and Ding \cite{ma15ssp}, both in the context of
symplectic methods.  Using similar ideas,
it is possible to find conditions for a more general partitioning. However, the extra freedom gained
by more partitionings is to some extent lost by the huge number of extra conditions that have to be
satisfied. In this paper, the discussion is restricted to separable equations with the number of partitions of \cref{eq:psde} to be two, i.e.}
\begin{subequations}
	\label{eq:psde2}
\begin{align}
	\dX^{(1)}(t) &=\sum_{\diffusionindex=0}^\diffusionnum g^{(1)}_\diffusionindex(X^{(2)}(t))\circ \dW_\diffusionindex(t), \\
	\dX^{(2)}(t) &=\sum_{\diffusionindex=0}^\diffusionnum g^{(2)}_\diffusionindex(X^{(1)}(t))\circ \dW_\diffusionindex(t),
\end{align}
\end{subequations}
{which is also the stochastic counterpart to the system discussed in \cite{abia1993partitioned}.}

We assume that the system has a quadratic invariant $I(X^{(1)},X^{(2)})={X^{(1)}}^\top DX^{(2)}$ for
a matrix $D$ of the appropriate dimension and arbitrary initial values $X^{(1)}(0),X^{(2)}(0)$.

{\begin{example}[Synchrotron oscillations]\label{ex:synchrotron}
Consider a stochastically perturbed Hamiltonian system of a pendulum (\cite{seesselberg94soo}, where we set $\lambda_A = 0$) with Hamiltonians
\begin{align}
	H_0(p,x,t)=\frac{p^2}{2},\qquad H_1(p,x,t)=\omega^2 \sin (x) \lambda_{Ph}.
	\label{eq:synchtrotron_hamiltonian}
\end{align}
The resulting SDE system
\begin{align}
	dp=-\omega^2 \sin (x)dt - \lambda_{Ph} \omega^2 \cos(x)\circ dW(t), \quad dx=pdt
	\label{eq:synchrotron_phase_noise}
\end{align}
preserves symplecticity, i.\,e.\ a quadratic invariant \cite[Section 4.1]{milstein04snf}.
\end{example}
}
As in \cite[chapter IV.2.2]{hairer2010geometric} one can prove the following theorem:
\begin{theorem}
	\label{thm:psrk_quad_invariants}
	If the coefficients of the {SPRK} method \eqref{eq:psrk} satisfy
	\begin{align}
\gamma_i^{(1,\diffusionindex_1)}\gamma_j^{(2,\diffusionindex_2)}&=\begin{multlined}[t]\gamma_j^{(2,\diffusionindex_2)}Z_{j,i}^{(1,\diffusionindex_1)}+\gamma_i^{(1,\diffusionindex_1)}Z_{i,j}^{(2,\diffusionindex_2)} \\\quad \forall i,j=1,\dots,s \text{ and } \forall \diffusionindex_1,\diffusionindex_2=0,1,\dots,\diffusionnum \label{eq:psrk_quad_invariant_b}
\end{multlined}
	\end{align}
	then it preserves quadratic invariants of \cref{eq:psde2} of the form $I(X^{(1)},X^{(2)})={X^{(1)}}^\top D X^{(2)}$.
\end{theorem}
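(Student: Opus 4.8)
The plan is to mimic the classical argument for partitioned Runge--Kutta methods preserving bilinear invariants (as in \cite{abia1993partitioned} and \cite[IV.2.2]{hairer2010geometric}), carrying it out directly on the numerical step $Y_n\mapsto Y_{n+1}$ and supplying the one genuinely stochastic ingredient: the replacement of the ordinary chain rule by the Stratonovich product rule. This is precisely why the hypothesis \eqref{eq:psde2} is formulated for Stratonovich and not It\^o SDEs; in the It\^o setting the product rule acquires an extra quadratic-covariation term and the computation below no longer closes.

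First I would convert the assumption that $I(X^{(1)},X^{(2)})={X^{(1)}}^\top D X^{(2)}$ is an invariant of \eqref{eq:psde2} into pointwise identities on the coefficient functions. Applying the Stratonovich product rule along the exact flow gives
\[
  \dop I = \sum_{\diffusionindex=0}^{\diffusionnum}\Bigl( g_\diffusionindex^{(1)}(X^{(2)})^\top D X^{(2)} + {X^{(1)}}^\top D g_\diffusionindex^{(2)}(X^{(1)}) \Bigr)\star \dW_\diffusionindex .
\]
Since $\dW_0=\dt,\dW_1,\dots,\dW_\diffusionnum$ are independent and the identity must hold for arbitrary initial data, each bracket vanishes identically in $(X^{(1)},X^{(2)})$. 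As the first summand depends only on $X^{(2)}$ and the second only on $X^{(1)}$, each is separately constant; evaluating at $X^{(2)}=0$ and at $X^{(1)}=0$ (where the factors $DX^{(2)}$, resp.\ ${X^{(1)}}^\top D$, vanish) forces both constants to be zero, yielding
\[
  g_\diffusionindex^{(1)}(v)^\top D v = 0, \qquad u^\top D g_\diffusionindex^{(2)}(u) = 0 \qquad \text{for all } u,v \text{ and } \diffusionindex=0,\dots,\diffusionnum .
\]
It is essential that the system is separable, so that $g_\diffusionindex^{(1)}$ and $g_\diffusionindex^{(2)}$ depend on disjoint variables; this is exactly what lets the single invariance condition split into the two identities that the numerical computation needs.

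Next I would expand the increment of the invariant over one step. Using \eqref{eq:psrk} in the separable form $H_i^{(1)}=Y_n^{(1)}+\sum_{\diffusionindex,k}Z_{i,k}^{(1,\diffusionindex)}g_\diffusionindex^{(1)}(H_k^{(2)})$ and its analogue for $H_i^{(2)}$, I would write ${Y_{n+1}^{(1)}}^\top D\, Y_{n+1}^{(2)}-{Y_n^{(1)}}^\top D\, Y_n^{(2)}$ as a sum of three contributions: two \emph{linear} terms in which $Y_n^{(1)}$ or $Y_n^{(2)}$ is eliminated via the stage equations, and one \emph{quadratic} term $\sum \gamma_i^{(1,\diffusionindex_1)}\gamma_j^{(2,\diffusionindex_2)}\,g_{\diffusionindex_1}^{(1)}(H_i^{(2)})^\top D g_{\diffusionindex_2}^{(2)}(H_j^{(1)})$. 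Substituting the stage equations into the linear terms produces, besides further cross terms, the diagonal expressions $\sum_{\diffusionindex,j}\gamma_j^{(2,\diffusionindex)}{H_j^{(1)}}^\top D g_\diffusionindex^{(2)}(H_j^{(1)})$ and $\sum_{\diffusionindex,i}\gamma_i^{(1,\diffusionindex)}g_\diffusionindex^{(1)}(H_i^{(2)})^\top D H_i^{(2)}$, in which the argument of each $g$ and its accompanying stage value carry the same index, so that the two structural identities above make them vanish.

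Finally I would collect the surviving cross terms. After relabelling the stage indices so that every remaining summand takes the common form $g_{\diffusionindex_1}^{(1)}(H_a^{(2)})^\top D g_{\diffusionindex_2}^{(2)}(H_b^{(1)})$, its total coefficient is
\[
  \gamma_a^{(1,\diffusionindex_1)}\gamma_b^{(2,\diffusionindex_2)} - \gamma_b^{(2,\diffusionindex_2)}Z_{b,a}^{(1,\diffusionindex_1)} - \gamma_a^{(1,\diffusionindex_1)}Z_{a,b}^{(2,\diffusionindex_2)},
\]
which is zero for all $a,b,\diffusionindex_1,\diffusionindex_2$ exactly by hypothesis \eqref{eq:psrk_quad_invariant_b}; hence the increment of $I$ vanishes and $I$ is conserved. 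I expect the only real obstacle to be the bookkeeping in this last step: one must substitute the stage equations in both linear terms, track the index transpositions so that $Z^{(1)}$ and $Z^{(2)}$ appear in the pattern demanded by \eqref{eq:psrk_quad_invariant_b}, and apply the structural identities at the right place. All of the stochastic content is confined to the single use of the Stratonovich product rule in the second step.
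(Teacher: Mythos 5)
Your proposal is correct and follows essentially the same route as the paper, which gives no detailed proof but simply invokes the classical argument of \cite[Chapter IV.2.2]{hairer2010geometric}: extract the pointwise identities $g_{\diffusionindex}^{(1)}(v)^\top D v=0$ and $u^\top D g_{\diffusionindex}^{(2)}(u)=0$ from Stratonovich invariance and separability, expand ${Y_{n+1}^{(1)}}^\top D Y_{n+1}^{(2)}$ over one step, annihilate the diagonal terms with those identities, and annihilate the cross terms with \eqref{eq:psrk_quad_invariant_b}. Your bookkeeping of the cross-term coefficients matches the condition exactly, so the argument closes as claimed.
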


When the conditions \eqref{eq:psrk_quad_invariant_b} are fulfilled, the number of order conditions
stated in \cref{thm:milstein} can be reduced.
The key ingredient in the analysis is the Butcher product of two trees
$u,v \in \tau_1,\tau_2,\dots,\tau_{\kappa} \in
T\setminus\{\emptyset_1,\dots,\emptyset_{\partitionnum}\}$. If
\begin{equation}\label{eq:uandv}
u = \left[ u_1, \dots,u_{\kappa_1}\right]_{\partitionindex_1,\diffusionindex_1}, \quad v = \left[ v_1, \dots,v_{\kappa_2}\right]_{\partitionindex_2,\diffusionindex_2},
\end{equation}
the \emph{Butcher product} is defined by
\begin{align*}
	u \circ v &= [u_1, \dots, u_{\kappa_1},v]_{\partitionindex_1,\diffusionindex_1}{,}
\end{align*}
see \cref{fig:butcherproduct}.
We can now state the following lemma:
\begin{lemma}
	\label{lem:nonrooted_exact}
	For Stratonovich SDEs of the form \eqref{eq:psde2} we have for all $u \in T_{\partitionindex_1,\diffusionindex_1}$ and $v \in T_{\partitionindex_2,\diffusionindex_2}$
	\begin{align*}
		\phi(u)(h)\cdot \phi(v)(h)&=\phi(u\circ v)(h) + \phi(v\circ u)(h)
	\end{align*}
	for all $\partitionindex_1,\partitionindex_2=1,2$ and $\diffusionindex_1,\diffusionindex_2=0,1,\dots,\diffusionnum$.
\end{lemma}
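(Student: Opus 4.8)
The plan is to recognise the claimed identity as nothing more than the ordinary Leibniz product rule, which in Stratonovich calculus holds without any correction term. Writing $u$ and $v$ as in \eqref{eq:uandv} and abbreviating the products over the subtrees by $P(s)=\prod_{i=1}^{\kappa_1}\phi(u_i)(s)$ and $R(s)=\prod_{j=1}^{\kappa_2}\phi(v_j)(s)$ --- with the convention that an empty product equals $1$, which covers the single-node base cases $u=\bullet_{\partitionindex_1,\diffusionindex_1}$ and $v=\bullet_{\partitionindex_2,\diffusionindex_2}$ --- \cref{thm:part_exact} expresses the two weights as Stratonovich integrals,
\[
  \phi(u)(s)=\int_0^s P(\sigma)\circ\dW_{\diffusionindex_1}(\sigma),\qquad
  \phi(v)(s)=\int_0^s R(\sigma)\circ\dW_{\diffusionindex_2}(\sigma),
\]
both continuous semimartingales vanishing at $s=0$.

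Next I would apply integration by parts for Stratonovich integrals to the product $\phi(u)\,\phi(v)$. Because Stratonovich calculus obeys the usual product rule, $\dop\bigl(\phi(u)\,\phi(v)\bigr)=\phi(u)\circ\dop\phi(v)+\phi(v)\circ\dop\phi(u)$; integrating from $0$ to $h$ and using $\phi(u)(0)=\phi(v)(0)=0$ gives
\[
  \phi(u)(h)\,\phi(v)(h)=\int_0^h\phi(v)(s)\circ\dop\phi(u)(s)+\int_0^h\phi(u)(s)\circ\dop\phi(v)(s).
\]

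It then remains to identify the two terms with the weights of the Butcher products. Substituting $\dop\phi(u)(s)=P(s)\circ\dW_{\diffusionindex_1}(s)$ turns the first integral into $\int_0^h\bigl(\prod_{i=1}^{\kappa_1}\phi(u_i)(s)\bigr)\phi(v)(s)\circ\dW_{\diffusionindex_1}(s)$, which is exactly $\phi(u\circ v)(h)$: by definition $u\circ v=[u_1,\dots,u_{\kappa_1},v]_{\partitionindex_1,\diffusionindex_1}$, so \cref{thm:part_exact} applied to $u\circ v$ reproduces this integral with $v$ as the extra subtree attached to the root of $u$. Symmetrically, the second integral equals $\phi(v\circ u)(h)$, and adding the two proves the lemma.

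The key point --- and the only real obstacle --- is that the elementary weights must satisfy the \emph{ordinary} product rule, with no It\^o second-order term; this is exactly why the statement is confined to Stratonovich SDEs, consistent with the remarks in the introduction. Apart from that, the argument needs only the minor bookkeeping of reading empty products as $1$ in the single-node cases, together with the observation that the shapes $\partitionindex_1,\partitionindex_2$ influence only the elementary differentials $F(\tau)$ and not the weights $\phi(\tau)$, so the derivation is entirely insensitive to them.
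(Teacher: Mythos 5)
Your proposal is correct and follows essentially the same route as the paper's own proof: both write $\phi(u)$ and $\phi(v)$ as Stratonovich integrals over the products of their subtrees' weights, apply the Stratonovich product rule (integration by parts without It\^o correction), and identify the two resulting integrals with $\phi(u\circ v)(h)$ and $\phi(v\circ u)(h)$. Your explicit remarks on the empty-product convention for single-node trees and on the weights being independent of the shapes are left implicit in the paper but change nothing of substance.
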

\begin{figure}[t]
  \[
     \begin{array}{cccc}
        \quad \tu \quad &  \quad  \tv \quad  & \quad  \tuv \quad & \quad  \tvu \quad  \\ u & v & u\circ v & v\circ u
     \end{array}
  \]
  \caption{Demonstration of the Butcher product (with real colors and shapes instead of indices).\label{fig:butcherproduct}}
\end{figure}
\begin{proof}
  As for $u$ and $v$ given by \cref{eq:uandv} it holds
  \[ \phi(u)(t) = \int_0^t\prod_{k_1=1}^{\kappa_1}\phi(u_{k_1})(s)\circ \dW_{\diffusionindex_1}(s), \qquad
     \phi(v)(t) = \int_0^t\prod_{k_2=1}^{\kappa_2}\phi(v_{k_2})(s)\circ \dW_{\diffusionindex_2}(s),
  \]
  the product rule for Stratonovich integrals gives
  \begin{align*}
	\phi(u)(t)\cdot \phi(v)(t)&=\int_0^t
	\phi(v)(s)\prod_{\treeindex_1=1}^{\kappa_1}\phi(u_{\treeindex_1})(s)\circ \dW_{\diffusionindex_1}(s) \\
	&+\int_0^t \phi(u)(s)\prod_{\treeindex_2=1}^{\kappa_2}\phi(v_{\treeindex_2})(s)\circ \dW_{\diffusionindex_2}(s) \\
	&= \phi(u \circ v)(t)+\phi(v \circ u)(t).
\end{align*}
\end{proof}
For $\partitionindex_1\neq\partitionindex_2$ this rule also holds for the weight functions of the numerical solution as stated in the following lemma:
\begin{lemma}
	\label{lem:nonrooted_num}
	If an SPRK method of the form \eqref{eq:psrk} satisfies the conditions in \cref{thm:psrk_quad_invariants},
	then for all $u \in T_{1,\diffusionindex_1}$ and $v \in T_{2,\diffusionindex_2}$
	\begin{align*}
	\Phi(u)(h)\cdot \Phi(v)(h)=\Phi(u \circ v)(h) + \Phi(v \circ u)(h)
	\end{align*}
	for all $\diffusionindex_1,\diffusionindex_2=0,1,\dots,m$.
\end{lemma}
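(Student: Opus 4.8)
The plan is to prove this by direct computation, in parallel with the proof of \cref{lem:nonrooted_exact}, but with the Stratonovich product rule replaced by the recursive definitions \eqref{eq:num_b_h}--\eqref{eq:num_b_y} of the numerical weight functions together with the algebraic condition \eqref{eq:psrk_quad_invariant_b}. Write $u=[u_1,\dots,u_{\kappa_1}]_{1,\diffusionindex_1}$ and $v=[v_1,\dots,v_{\kappa_2}]_{2,\diffusionindex_2}$, allowing $\kappa_1=0$ or $\kappa_2=0$ (the single-node trees $\bullet_{1,\diffusionindex_1}$, $\bullet_{2,\diffusionindex_2}$, for which the relevant products below are empty). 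The guiding idea is that both sides of the claimed identity can be written as one and the same double sum over stage indices $i,j$ of the common factor $\bigl(\prod_{\treeindex=1}^{\kappa_1}\Psi_i(u_\treeindex)(h)\bigr)\bigl(\prod_{\treeindex=1}^{\kappa_2}\Psi_j(v_\treeindex)(h)\bigr)$, the two sides differing only in the coefficient that multiplies this factor.

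First I would expand the left-hand side: multiplying the two series obtained from \eqref{eq:num_b_y_b} gives
\begin{equation*}
  \Phi(u)(h)\,\Phi(v)(h)=\sum_{i,j=1}^s \gamma_i^{(1,\diffusionindex_1)}\gamma_j^{(2,\diffusionindex_2)}\Bigl(\prod_{\treeindex=1}^{\kappa_1}\Psi_i(u_\treeindex)(h)\Bigr)\Bigl(\prod_{\treeindex=1}^{\kappa_2}\Psi_j(v_\treeindex)(h)\Bigr).
\end{equation*}
Next I would expand the two Butcher products. Since $u\circ v=[u_1,\dots,u_{\kappa_1},v]_{1,\diffusionindex_1}$, applying \eqref{eq:num_b_y_b} and then unrolling one level of the stage recursion \eqref{eq:num_b_h_b} for the appended factor, $\Psi_i(v)(h)=\sum_{j=1}^s Z_{i,j}^{(2,\diffusionindex_2)}\prod_{\treeindex=1}^{\kappa_2}\Psi_j(v_\treeindex)(h)$, yields
\begin{equation*}
  \Phi(u\circ v)(h)=\sum_{i,j=1}^s \gamma_i^{(1,\diffusionindex_1)}Z_{i,j}^{(2,\diffusionindex_2)}\Bigl(\prod_{\treeindex=1}^{\kappa_1}\Psi_i(u_\treeindex)(h)\Bigr)\Bigl(\prod_{\treeindex=1}^{\kappa_2}\Psi_j(v_\treeindex)(h)\Bigr),
\end{equation*}
and symmetrically, from $v\circ u=[v_1,\dots,v_{\kappa_2},u]_{2,\diffusionindex_2}$ and $\Psi_j(u)(h)=\sum_{i=1}^s Z_{j,i}^{(1,\diffusionindex_1)}\prod_{\treeindex=1}^{\kappa_1}\Psi_i(u_\treeindex)(h)$,
\begin{equation*}
  \Phi(v\circ u)(h)=\sum_{i,j=1}^s \gamma_j^{(2,\diffusionindex_2)}Z_{j,i}^{(1,\diffusionindex_1)}\Bigl(\prod_{\treeindex=1}^{\kappa_1}\Psi_i(u_\treeindex)(h)\Bigr)\Bigl(\prod_{\treeindex=1}^{\kappa_2}\Psi_j(v_\treeindex)(h)\Bigr).
\end{equation*}

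Adding the last two displays and invoking \eqref{eq:psrk_quad_invariant_b}, which states precisely that $\gamma_i^{(1,\diffusionindex_1)}Z_{i,j}^{(2,\diffusionindex_2)}+\gamma_j^{(2,\diffusionindex_2)}Z_{j,i}^{(1,\diffusionindex_1)}=\gamma_i^{(1,\diffusionindex_1)}\gamma_j^{(2,\diffusionindex_2)}$ for all $i,j$, collapses the coefficient of the common factor and reproduces exactly the expansion of $\Phi(u)(h)\,\Phi(v)(h)$ above, closing the argument. I do not expect a genuine obstacle, as the manipulation is purely formal; the only points needing care are the index bookkeeping --- matching the order $Z_{i,j}$ versus $Z_{j,i}$ to the precise form of \eqref{eq:psrk_quad_invariant_b} --- and verifying that the empty-product convention covers the single-node cases $\kappa_1=0$ or $\kappa_2=0$. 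It should also be stressed that, in contrast to \cref{lem:nonrooted_exact}, the hypothesis $\partitionindex_1\neq\partitionindex_2$ (here $u\in T_{1,\diffusionindex_1}$, $v\in T_{2,\diffusionindex_2}$) is indispensable: condition \eqref{eq:psrk_quad_invariant_b} couples exactly the partition-$1$ weights with the partition-$2$ weights, and it is this cross-partition coupling that furnishes the required identity.
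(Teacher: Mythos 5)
Your proof is correct and is essentially the paper's own argument, just run in the opposite direction: the paper multiplies both sides of \eqref{eq:psrk_quad_invariant_b} by $\prod_{k}\Psi_i(u_{k})(h)\prod_{k}\Psi_j(v_{k})(h)$ and sums over $i,j$, then identifies the resulting sums via \eqref{eq:num_b_h} and \eqref{eq:num_b_y}, whereas you expand $\Phi(u)\Phi(v)$, $\Phi(u\circ v)$, $\Phi(v\circ u)$ into the same double sums and match coefficients using \eqref{eq:psrk_quad_invariant_b}. The index bookkeeping and the empty-product treatment of single-node trees are handled correctly, so nothing is missing.
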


\begin{proof}
Multiply both sides of \cref{eq:psrk_quad_invariant_b} in \cref{thm:psrk_quad_invariants} by
\[\prod_{\treeindex_1=1}^{\kappa_1}\Psi_i(u_{\treeindex_1})(h)\prod_{\treeindex_2=1}^{\kappa_2}\Psi_j(v_{\treeindex_2})(h)\] and sum over all $i,j=1,\dots,s$,
\begin{multline*}
	\left(\sum_{i}\gamma_i^{(1,\diffusionindex_1)}\prod_{\treeindex_1=1}^{\kappa_1}\Psi_i(u_{\treeindex_1})(h)\right)\left(\sum_j\gamma_j^{(2,\diffusionindex_2)}\prod_{\treeindex_2=1}^{\kappa_2}\Psi_j(v_{\treeindex_2})(h)\right)= \\ \sum_{i,j}\left(\gamma_j^{(2,\diffusionindex_2)}Z_{j,i}^{(1,\diffusionindex_1)}+\gamma_i^{(1,\diffusionindex_1)}Z_{i,j}^{(2,\diffusionindex_2)}\right)\prod_{\treeindex_1=1}^{\kappa_1}\Psi_i(u_{\treeindex_1})(h)\prod_{\treeindex_2=1}^{\kappa_2}\Psi_j(v_{\treeindex_2})(h).
\end{multline*}
Using \cref{eq:num_b_h,eq:num_b_y} completes the proof.
\end{proof}
Let $TS$ be the set of trees for which a node of one shape will never be directly followed by a node of the same shape. For separable SDEs, all elementary differentials of trees in $T\setminus TS$ vanish.
Given a $\tau \in TS$, let $\hat{\tau}$ be
the corresponding unrooted tree, and let
$\hatTS(\hat{\tau}) \subset TS$ be the set of trees obtained from $\hat{\tau}$ by assigning one of the
nodes as the root, see \cref{fig:unrooted} for an illustration.
\begin{figure}[t]
  \[\trootless  \qquad \qquad \tuvl \qquad  \tuv \qquad \tvu \qquad \tuvr   \]
  \caption{An unrooted tree $\hat{\tau}$ to the left, and some of the trees in $\hatTS(\hat{\tau})$.
  The figure also illustrates the process described in the proof of \cref{thm:ordcond_quad}.}
  \label{fig:unrooted}
\end{figure}

\begin{theorem} \label{thm:ordcond_quad}
	Assume that \cref{eq:psrk_quad_invariant_b} is satisfied.
	Let $\hat{\tau} \in \hatTS$ be an unrooted tree
	of order $q\leq p$. If ${\phi}(\tau)(h)=\Phi(\tau)(h) +
	\mathcal{O}(h^{p+\frac{1}{2}})$ for one rooted tree $\tau \in \hatTS(\hat{\tau})$ and all
	rooted trees of order less than $q$, then it holds that
	${\phi}(\tau)(h)=\Phi(\tau)(h) + \mathcal{O}(h^{p+\frac{1}{2}})$ for all $\tau \in \hatTS(\hat{\tau})$. {Similarly, if $E\phi(\tau)(h)=E\Phi(\tau)(h) +
	\mathcal{O}(h^{p+\frac{1}{2}})$ for one rooted tree $\tau \in \hatTS(\hat{\tau})$ and $\phi(\tau)(h)=\Phi(\tau)(h) +
	\mathcal{O}(h^{p+\frac{1}{2}})$ for all
	rooted trees of order less than $q$, then it holds that
	$E \phi(\tau)(h)=E \Phi(\tau)(h) + \mathcal{O}(h^{p+\frac{1}{2}})$ for all $\tau \in \hatTS(\hat{\tau})$.}
\end{theorem}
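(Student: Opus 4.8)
The plan is to exploit the fact that any two rootings of the same unrooted tree $\hat\tau$ are connected by a sequence of elementary moves that shift the root across a single edge, and that each such move is governed precisely by the Butcher-product identities of \cref{lem:nonrooted_exact,lem:nonrooted_num}. The first observation I would make is that, since $\hat\tau\in\hatTS$ and there are only two shapes, \emph{every} edge of $\hat\tau$ joins a node of shape $1$ to a node of shape $2$. Hence the numerical product rule of \cref{lem:nonrooted_num}, which requires the two factors to have opposite shapes, is available across every edge, not just some of them.

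Next I would fix an edge $\{a,b\}$ of $\hat\tau$ and let $\tau_a,\tau_b\in\hatTS(\hat\tau)$ be the trees obtained by rooting at $a$ and at $b$. Deleting this edge splits $\hat\tau$ into two components; letting $u$ be the component containing $a$ (rooted at $a$) and $v$ the component containing $b$ (rooted at $b$), one checks directly from the definition of the Butcher product that $\tau_a=u\circ v$ and $\tau_b=v\circ u$, with $u$ and $v$ of opposite shapes. \cref{lem:nonrooted_exact,lem:nonrooted_num} then give
\begin{align*}
  \phi(\tau_a)(h)+\phi(\tau_b)(h) &= \phi(u)(h)\,\phi(v)(h), \\
  \Phi(\tau_a)(h)+\Phi(\tau_b)(h) &= \Phi(u)(h)\,\Phi(v)(h),
\end{align*}
and subtracting yields the key identity
\[
  \bigl(\phi(\tau_a)-\Phi(\tau_a)\bigr)+\bigl(\phi(\tau_b)-\Phi(\tau_b)\bigr)
  = \phi(u)\phi(v)-\Phi(u)\Phi(v).
\]

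The crucial point is that the right-hand side is of strictly higher order than $h^{p+1/2}$. Since $\rho$ depends only on the (unchanged) node colors, it is an invariant of the unrooted tree, so $\rho(u)+\rho(v)=q\le p$ with $\rho(u),\rho(v)\ge\tfrac12$; thus both $u$ and $v$ have order strictly less than $q$ and the hypothesis $\phi-\Phi=\mathcal{O}(h^{p+1/2})$ applies to each. Writing
\[
  \phi(u)\phi(v)-\Phi(u)\Phi(v)
  = \phi(u)\bigl(\phi(v)-\Phi(v)\bigr)+\bigl(\phi(u)-\Phi(u)\bigr)\Phi(v)
\]
and using the standard estimates $\phi(u)=\mathcal{O}(h^{\rho(u)})$, $\Phi(v)=\mathcal{O}(h^{\rho(v)})$, each summand is $\mathcal{O}(h^{p+1/2+\min(\rho(u),\rho(v))})\subseteq\mathcal{O}(h^{p+1})$. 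Hence if $\phi(\tau_a)-\Phi(\tau_a)=\mathcal{O}(h^{p+1/2})$ at one endpoint, the identity forces the same estimate at the adjacent endpoint $b$. Because $\hat\tau$ is connected, I would then propagate this along the unique path in $\hat\tau$ from the distinguished rooting supplied by the hypothesis to any chosen node, concluding by induction on the path length that the estimate holds for every $\tau\in\hatTS(\hat\tau)$; note that the same lower-order hypothesis (for all trees of order $<q$) covers the splitting trees $u,v$ arising at every edge traversed.

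For the weak statement I would argue identically after taking expectations, bounding $E[\phi(u)\phi(v)-\Phi(u)\Phi(v)]$ by Cauchy--Schwarz against the same factors; here only $L^2$-estimates enter, so this case is in fact the easier one. I expect the main obstacle to lie in the strong statement, where the mismatch $\phi(u)\phi(v)-\Phi(u)\Phi(v)$ must be controlled in $L^2$-norm even though it is the norm of a \emph{product} of random variables, forcing the splitting above to be combined with moment bounds beyond $L^2$. This is resolved by the fact that, for a fixed tree, $\phi(\tau)$, $\Phi(\tau)$ and their difference lie in a Wiener chaos of bounded degree (iterated Stratonovich integrals, respectively polynomials in the increments $\Delta W_m$), on which the $L^r$-norms are equivalent uniformly in $h$; consequently the $L^2$-rate $\mathcal{O}(h^{p+1/2})$ for $\phi-\Phi$ upgrades to every $L^r$, the H\"older estimates go through, and the induction closes.
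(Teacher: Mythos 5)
Your proposal is correct and follows essentially the same route as the paper's proof: the same Butcher-product decomposition of a rooted tree across an edge into $u\circ v$ and $v\circ u$, the same application of \cref{lem:nonrooted_exact,lem:nonrooted_num} followed by subtraction and an order estimate on $\phi(u)\phi(v)-\Phi(u)\Phi(v)$, and the same propagation of the root through the connected tree. The only difference is that you make explicit the product-error bookkeeping and the $L^2$-norm-of-a-product issue (resolved via moment equivalence on finite Wiener chaos), details which the paper leaves implicit by deferring to the non-partitioned case in \cite{anmarkrud17ocf}.
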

\begin{proof}
	The argument is the same as for the non-partitioned case proven in \cite{anmarkrud17ocf}.
	For trees with one node the theorem is trivially true.
	Let $\hat{\tau}$ be an unrooted tree of order $q$ and two or more nodes, and let $\tau$ be a corresponding rooted tree $\tau \in \hatTS(\hat{\tau})$.
	Pick one branch $v$ from the root of $\tau$ and let the remaining part of $\tau$ be $u$, so that $\tau = u \circ v$. Then $u$ and $v$ have roots of different shapes, and \cref{lem:nonrooted_num} applies.
	Clearly, the orders of $u$ and $v$ are less than the order of $\tau$, and by \cref{lem:nonrooted_exact} and the assumptions of the theorem we then have
\begin{align*}
  {\phi}(v\circ u)(h) & = \Phi(v\circ u)(h) + \mathcal{O}(h^{p+\frac{1}{2}})
  \intertext{{respectively}}
  {E\phi(v\circ u)(h) }& {= E\Phi(v\circ u)(h) + \mathcal{O}(h^{p+\frac{1}{2}})}
  {.}
\end{align*}

Because the choice of branch $v$ was arbitrary, this means that this condition is satisfied for all trees with the same graph as $\tau$, but with a root shifted to one of its neighboring nodes. A repeated use of this argument proves the
  result. The process is illustrated in \cref{fig:unrooted}.
\end{proof}
{
\begin{example}\label{ex:HongsStoermerVerletMethod}
A simple example for an SPRK method fulfilling \eqref{eq:psrk_quad_invariant_b} is the stochastic St\"{o}rmer-Verlet method \cite{hong15poq}, given by the following tableau:
\begin{equation*}
  \renewcommand{\arraystretch}{1.5}
  \begin{array}{cc|cc}
    0 & 0 & 0 &  0 \\
    \frac{h}{2} & \frac{h}{2} &  \frac{\Delta W_\diffusionindex}{2}  & \frac{\Delta W_\diffusionindex}{2}  \\
    \hline
    \frac{h}{2} & 0 &  \frac{\Delta W_\diffusionindex}{2}  & 0 \\
    \frac{h}{2} & 0 &  \frac{\Delta W_\diffusionindex}{2} & 0 \\
    \hline \hline
    \frac{h}{2} & \frac{h}{2} & \frac{\Delta W_\diffusionindex}{2}  &  \frac{\Delta W_\diffusionindex}{2} \\
    \hline
    \frac{h}{2} & \frac{h}{2} & \frac{\Delta W_\diffusionindex}{2}  &  \frac{\Delta W_\diffusionindex}{2} \\
  \end{array}.
\end{equation*}
To analyze its order, note that due to \eqref{eq:psde2} being separable, the elementary differentials corresponding to trees for which a node of shape $q$ ($q\in\{1,2\}$) is followed by a node of the same shape vanish. Taking into account \cref{thm:ordcond_quad}, the remaining trees $\tau$ with $\rho(\tau)\leq\frac32$ to be considered are
then (for $q_1,q_2\in\{1,2\}$ with $q_1\neq q_2$)
\begin{equation} \label{eq:trees_separable_quadratic}
  \begin{array}{@{}c|@{}c@{}c@{}c@{}c@{}c@{}} \\
    \text{i}&  1 & 2 & 3 & 5 &7
     \\ \hline
  \tau_i & \maketree{\rn{white}{$_{q_1,m_1}$}}&
  \maketree{\rn{black}{$_{q_1}$}}&
  \maketree{\rn{white}{$_{1,m_1}$} child{\n{white}{$_{2,m_2}$}}} &
  \maketree{\rn{white}{$_{{q}_1,m_1}$} child{\n{white}{$_{{q}_2,m_2}$} child{\n{white}{$_{{q}_1,m_3}$}}}}&
  \maketree{\rn{black}{$_{q_1}$} child{\n{white}{$_{q_2,m_2}$}}}
\end{array}
\end{equation}
It is straightforward to show that $\phi(\tau) = \Phi(\tau)$ for trees $\tau_1,\tau_2$ in \cref{eq:trees_separable_quadratic}, and that this is {true for $\tau_3$ only} if $m_1=m_2$, while it holds also for $m_1\neq m_2$ that $E\phi(\tau_3) = E\Phi(\tau_3)$. Thus, for SDEs \eqref{eq:psde2} with multidimensional noise, the above method is only of order $\frac12$. For scalar noise, it holds also $\phi(\tau_7) = \Phi(\tau_7)$ and $E\phi(\tau_5) = E\Phi(\tau_5)$, but not $\phi(\tau_5) = \Phi(\tau_5)$, so in this case the method is of order 1. Similarly, it follows that the method is of weak order one for multidimensional noise.
\end{example}
}
\begin{table}
    \centering
    {
    \caption{{Total number of trees up to order 2 for SDEs with scalar noise and two partitionings. Here, column (all) gives the number when assuming no restrictions, column (s.p.) the number counting only trees with nonzero elementary differentials for separable problems, and column (q.i.) the number of independent conditions if additionally the quadratic invariant condition is satisfied.}}
    \label{tab:tree_count}
    \begin{tabular}{crrr}
    	\hline\noalign{\smallskip}
        $\rho$ & all & s.p. & q.i. \\
    	\noalign{\smallskip}\hline\noalign{\smallskip}
        0.5 & 2 & 2 & 2 \\
        1 & 6 & 4 & 3 \\
        1.5 & 22 & 8 & 4 \\
        2 & 92 & 20 & 9 \\
       	\noalign{\smallskip}\hline\noalign{\smallskip}
     Sum & 122 & 34 & 18 \\
    	\noalign{\smallskip}\hline
    \end{tabular}
    }
\end{table}
{The content of this section demonstrates the potential of the B-series formulation, in the
  sense that theory developed and well understood for ODEs here directly can be applied for SDEs. It also demonstrates how to use the particular structure of the problem at hand to reduce the set of order conditions that have to be satisfied. This is demonstrated in \cref{tab:tree_count}. But even if the number of conditions is reduced in this case, the conditions in \cref{thm:psrk_quad_invariants} are still so restrictive that it is far from a trivial task to construct higher order methods.}
\section{Conclusion}
In this paper we have developed a general B-series theory for {SPRK}
methods. Such methods are rarely applied to general SDEs, they are more likely to be constructed
for SDEs with certain structure. We have therefore emphasized how the general theory, summarized in
\cref{thm:milstein}, can be modified to cover a few common examples of such cases. We hope other researchers can find the theory useful for constructing new methods, or to prove order results for applying existing methods to broader classes than originally constructed for.

\newcounter{Treenumber}\stepcounter{Treenumber}
\newcommand{\tabentry}[5]{\arabic{Treenumber}\stepcounter{Treenumber}\gentabentry{#1}{#2}{#3}{#4}{#5}}
\newcommand{\headentry}[5]{{No.}\gentabentry{#1}{#2}{#3}{#4}{#5}}
\newcommand{\gentabentry}[5]{&#1&#2&{$\begin{array}{c}\noalign{\vskip 1mm}#3\\[1mm]\hdashline \noalign{\vskip 1mm}   #4\end{array}$}&$\  #5 $}
\newcommand{\gind}[1]{g_{\diffusionindex_#1}^{(\partitionindex_#1)}}
\begin{landscape}
\appendix
\section{Table of trees and corresponding functions}\label{appendix}
\begin{longtable}{@{}r@{}ccccc}
\headentry{$\tau$}{$\rho(\tau)$}{\phi(\tau)}{\Phi(\tau)}{F(\tau)}
\\
\hline
\tabentry{\maketree{\sr}}{0.5}{W_{\diffusionindex_1}(h)}{\sum_{i}\limits \gamma_i^{(q_1,m_1)}}{\gind{1}}
\\
\tabentry{\maketree{\dr}}{1}{h}{\sum_{i}\limits \gamma_i^{(q_1,0)}}{g_{0}^{(\partitionindex_1)}}
\\
\tabentry{\maketree{\sr
	child{\sn{2}}
}
 }{1}{\int_0^h W_{\diffusionindex_2}(s) \star\dW_{\diffusionindex_1}(s)}{\sum_{i,j}\limits \gamma_i^{(q_1,m_1)} Z_{i,j}^{(q_2,m_2)}}{ (D_{\partitionindex_2}\gind{1})\gind{2}}
\\
\tabentry{\maketree{\sr
	child{\sn{3}}
	child{\sn{2}}
}}{1.5}{\int_0^h W_{\diffusionindex_2}(s)W_{\diffusionindex_3}(s)\star\dW_{\diffusionindex_1}(s)}{\sum_{i,j,k}\limits \gamma_i^{(q_1,m_1)}Z_{i,j}^{(q_2,m_2)}Z_{i,k}^{(q_3,m_3)}}{(D_{\partitionindex_2\partitionindex_3}\gind{1})(\gind{2},\gind{3})}
\\
\tabentry{\maketree{\sr
	child{\sn{2}
	child{\sn{3}}}
}}{1.5}{\int_0^h \int_0^s W_{\diffusionindex_3}(s_1)\star\dW_{\diffusionindex_2}(s_1)\star\dW_{\diffusionindex_1}(s)}{\sum_{i,j,k}\limits \gamma_i^{(q_1,m_1)}Z_{i,j}^{(q_2,m_2)}Z_{j,k}^{(q_3,m_3)}}{(D_{\partitionindex_2}\gind{1})(D_{\partitionindex_3}\gind{2})\gind{3}}
\\
\tabentry{\maketree{\sr
	child{\dn{2}}
}}{1.5}{\int_0^h s \star\dW_{\diffusionindex_1}(s)}{\sum_{i,j}\limits \gamma_i^{(q_1,m_1)} Z_{i,j}^{(q_2,0)}}{(D_{\partitionindex_2}\gind{1})g_0^{(\partitionindex_2)}}
\\
\tabentry{\maketree{\dr
	child{\sn{2}}
}}{ 1.5 }{\int_0^h W_{\diffusionindex_2}(s)\ds}{\sum_{i,j}\limits \gamma_i^{(q_1,0)} Z_{i,j}^{(q_2,m_2)}}{(D_{\partitionindex_2}g_0^{(\partitionindex_1)})\gind{2}}
\\
\tabentry{\maketree{\dr
	child{\dn{2}}
}}2{\frac{1}{2}h^2}{\sum_{i,j}\limits \gamma_i^{(q_1,0)} Z_{i,j}^{(q_2,0)}}{(D_{\partitionindex_2}g_0^{(\partitionindex_1)})g_0^{(\partitionindex_2)}}
\\
\tabentry{\maketree{\dr
	child{\sn{3}}
	child{\sn{2}}
}}2{\int_0^h W_{\diffusionindex_2}(s)W_{\diffusionindex_3}(s)\ds}{\sum_{i,j,k}\limits \gamma_i^{(q_1,0)}Z_{i,j}^{(q_2,m_2)}Z_{i,k}^{(q_3,m_3)}}{(D_{\partitionindex_2\partitionindex_3}g_0^{(\partitionindex_1)})(\gind{2},\gind{3})}
\\
\tabentry{\maketree{\sr
	child{\dn{3}}
	child{\sn{2}}
}}
2{\int_0^h W_{\diffusionindex_2}(s)s \star\dW_{\diffusionindex_1}(s)}{\sum_{i,j,k}\limits \gamma_i^{(q_1,m_1)}Z_{i,j}^{(q_2,m_2)}Z_{i,k}^{(q_3,0)}}{(D_{\partitionindex_2\partitionindex_3}\gind{1})(\gind{2},g_0^{(\partitionindex_3)})}
\\
\tabentry{\maketree{\dr
	child{\sn{2}
	child{\sn{3}}}
}}
2{\int_0^h\int_0^s W_{\diffusionindex_3}(s_1)\star\dW_{\diffusionindex_2}(s_1) \star\dW_{\diffusionindex_1}(s)}{\sum_{i,j,k}\limits \gamma_i^{(q_1,0)}Z_{i,j}^{(q_2,m_2)}Z_{j,k}^{(q_3,m_3)}}{(D_{\partitionindex_2}g_0^{(\partitionindex_1)})(D_{\partitionindex_2}\gind{2})\gind{3}}
\\
\tabentry{\maketree{\sr
	child{\dn{2}
	child{\sn{3}}}
}}2{\int_0^h\int_0^s W_{\diffusionindex_3}(s_1)\ds_1 \star\dW_{\diffusionindex_1}(s)}{\sum_{i,j,k}\limits \gamma_i^{(q_1,m_1)}Z_{i,j}^{(q_2,0)}Z_{j,k}^{(q_3,m_3)}}{(D_{\partitionindex_2}\gind{1})(D_{\partitionindex_3}g_0^{(\partitionindex_2)})\gind{3}}
\\
\tabentry{\maketree{\sr
	child{\sn{2}
	child{\dn{3}}}
}} 2{\int_0^h\int_0^s s_1 \star\dW_{\diffusionindex_2}(s_1) \star\dW_{\diffusionindex_1}(s)}{\sum_{i,j,k}\limits \gamma_i^{(q_1,m_1)}Z_{i,j}^{(q_2,m_2)}Z_{j,k}^{(q_3,0)}}{(D_{\partitionindex_2}\gind{1})(D_{\partitionindex_3}\gind{2})g_0^{(\partitionindex_3)}}
\\
\tabentry{\maketree{\sr
	child{ \sn{2}
		child{ \sn{3}
		    child{ \sn{4}
			}
		}
	}
}} 2 {\int_0^h \int_0^s \int_0^{s_1}\dW_{\diffusionindex_4}(s_2)\star\dW_{\diffusionindex_3}(s_2) \star\dW_{\diffusionindex_2}(s_1) \star\dW_{\diffusionindex_1}(s)}{\sum_{i,j,k,l}\limits \gamma_i^{(q_1,m_1)}Z_{i,j}^{(q_2,m_2)}Z_{j,k}^{(q_3,m_3)}Z_{k,l}^{(q_4,m_4)}}{(D_{\partitionindex_2}\gind{1})(D_{\partitionindex_3}\gind{2})(D_{\partitionindex_4}\gind{3})\gind{4}}
\\
\tabentry{\maketree{\sr
	child{ \sn{3}
	    child{ \sn{4}
		}
	}
	child{ \sn{2}}
}}2 {\int_0^h W_{\diffusionindex_2}(s) \int_0^{s} W_{\diffusionindex_4}(s_1) \circ\dW_{\diffusionindex_3}(s_1)\circ\dW_{\diffusionindex_1}(s)}{\sum_{i,j,k,l}\limits \gamma_i^{(q_1,m_1)}Z_{i,j}^{(q_2,m_2)}Z_{i,k}^{(q_3,m_3)}Z_{k,l}^{(q_4,m_4)}}{(D_{\partitionindex_2\partitionindex_3}\gind{1})(\gind{2},(D_{\partitionindex_4}\gind{3})\gind{4})}
\\
\tabentry{\maketree{\sr
	child{\sn{2}}
	child{\sn{3}}
	child{\sn{4}}
}} 2{\int_0^h W_{\diffusionindex_4}(s)W_{\diffusionindex_3}(s)W_{\diffusionindex_2}(s) \circ\dW_{\diffusionindex_1}(s)}{\sum_{i,j,k,l}\limits \gamma_i^{(q_1,m_1)}Z_{i,j}^{(q_2,m_2)}Z_{i,k}^{(q_3,m_3)}Z_{i,l}^{(q_4,m_4)}}{(D_{\partitionindex_2\partitionindex_3\partitionindex_4}\gind{1})(\gind{2},\gind{3},\gind{4})}
\\
\tabentry{\maketree{\sr
	child{\sn{2}
	child{\sn{3}}
	child{\sn{4}}}
}}
2{\int_0^h \int_0^s W_{\diffusionindex_4}(s_1)W_{\diffusionindex_3}(s_1)\circ\dW_{\diffusionindex_2}(s_1) \circ\dW_{\diffusionindex_1}(s)}{\sum_{i,j,k,l}\limits \gamma_i^{(q_1,m_1)}Z_{i,j}^{(q_2,m_2)}Z_{j,k}^{(q_3,m_3)}Z_{j,l}^{(q_4,m_4)}}
{(D_{\partitionindex_2}\gind{1})(D_{\partitionindex_3\partitionindex_4}\gind{2})(\gind{3},\gind{4})}
\end{longtable}
\end{landscape}


\begin{thebibliography}{10}

\bibitem{abia1993partitioned}
L~Abia and JM~Sanz-Serna, \emph{Partitioned {R}unge--{K}utta methods for
  separable {H}amiltonian problems}, Mathematics of computation \textbf{60}
  (1993), no.~202, 617--634.

\bibitem{ableidinger17asv}
Markus Ableidinger, Evelyn Buckwar, and Harald Hinterleitner, \emph{A
  stochastic version of the {J}ansen and {R}it neural mass model: Analysis and
  numerics}, The Journal of Mathematical Neuroscience \textbf{7} (2017), no.~1,
  8.

\bibitem{anmarkrud17ocf}
Sverre Anmarkrud and Anne Kv{\ae}rn{\o}, \emph{Order conditions for stochastic
  {R}unge--{K}utta methods preserving quadratic invariants of {S}tratonovich
  {SDE}s}, J. Comput. Appl. Math. \textbf{316} (2017), 40--46. \MR{3588726}

\bibitem{burrage96hso}
Kevin Burrage and Pamela~Marion Burrage, \emph{High strong order explicit
  {R}unge--{K}utta methods for stochastic ordinary differential equations},
  Appl. Numer. Math. \textbf{22} (1996), no.~1-3, 81--101, Special issue
  celebrating the centenary of {R}unge--{K}utta methods.

\bibitem{burrage00oco}
\bysame, \emph{Order conditions of stochastic {R}unge--{K}utta methods by
  {$B$}-series}, SIAM J. Numer. Anal. \textbf{38} (2000), no.~5, 1626--1646.

\bibitem{burrage09asd}
Kevin Burrage and Grant Lythe, \emph{Accurate stationary densities with
  partitioned numerical methods for stochastic differential equations}, SIAM J.
  Numer. Anal. \textbf{47} (2009), no.~3, 1601--1618. \MR{2505866
  (2011a:60009)}

\bibitem{burrage99thesis}
Pamela~M. Burrage, \emph{{R}unge--{K}utta methods for stochastic differential
  equations}, Ph.D. thesis, The University of Queensland, Brisbane, 1999.

\bibitem{butcher63cft}
John~C. Butcher, \emph{Coefficients for the study of {R}unge--{K}utta
  integration processes}, J. Austral. Math. Soc. \textbf{3} (1963), 185--201.
  \MR{0152129 (27 \#2109)}

\bibitem{debrabant08bsa}
Kristian Debrabant and Anne Kv{\ae}rn{\o}, \emph{B-series analysis of
  stochastic {R}unge--{K}utta methods that use an iterative scheme to compute
  their internal stage values}, SIAM J. Numer. Anal. \textbf{47} (2008/09),
  no.~1, 181--203.

\bibitem{debrabant10ste}
\bysame, \emph{Stochastic {T}aylor expansions: Weight functions of {B}-series
  expressed as multiple integrals}, Stoch. Anal. Appl. \textbf{28} (2010),
  no.~2, 293--302.

\bibitem{gronbech13asa}
Niels Gr{\o}nbech-Jensen and Oded Farago, \emph{A simple and effective
  {V}erlet-type algorithm for simulating {L}angevin dynamics}, Molecular
  Physics \textbf{111} (2013), no.~8, 983--991.

\bibitem{hairer2010geometric}
Ernst Hairer, Christian Lubich, and Gerhard Wanner, \emph{Geometric numerical
  integration}, Springer Series in Computational Mathematics, vol.~31,
  Springer, Heidelberg, 2010.

\bibitem{holm16sdh}
Darryl Holm and Tomasz Tyranowski, \emph{Stochastic discrete {H}amiltonian
  variational integrators},  (2016).

\bibitem{hong15poq}
Jialin Hong, Dongsheng Xu, and Peng Wang, \emph{Preservation of quadratic
  invariants of stochastic differential equations via {R}unge--{K}utta
  methods}, Appl. Numer. Math. \textbf{87} (2015), 38--52.

\bibitem{kloeden92nso}
Peter~E. Kloeden and Eckhard Platen, \emph{Numerical solution of stochastic
  differential equations}, Applications of Mathematics (New York), vol.~23,
  Springer-Verlag, Berlin, 1992.

\bibitem{komori07mcr}
Yoshio Komori, \emph{Multi-colored rooted tree analysis of the weak order
  conditions of a stochastic {R}unge--{K}utta family}, Appl. Numer. Math.
  \textbf{57} (2007), no.~2, 147--165.

\bibitem{komori97rta}
Yoshio Komori, Taketomo Mitsui, and Hiroshi Sugiura, \emph{Rooted tree analysis
  of the order conditions of {ROW}-type scheme for stochastic differential
  equations}, BIT \textbf{37} (1997), no.~1, 43--66.

\bibitem{ma12sca}
Qiang Ma, Deqiong Ding, and Xiaohua Ding, \emph{Symplectic conditions and
  stochastic generating functions of stochastic {R}unge--{K}utta methods for
  stochastic {H}amiltonian systems with multiplicative noise}, Appl. Math.
  Comput. \textbf{219} (2012), no.~2, 635--643.

\bibitem{ma15ssp}
Qiang Ma and Xiaohua Ding, \emph{Stochastic symplectic partitioned
  {R}unge--{K}utta methods for stochastic {H}amiltonian systems with
  multiplicative noise}, Applied Mathematics and Computation \textbf{252}
  (2015), 520 -- 534.

\bibitem{milstein02nmf}
G.~N. Milstein, Yu.~M. Repin, and M.~V. Tretyakov, \emph{Numerical methods for
  stochastic systems preserving symplectic structure}, SIAM J. Numer. Anal.
  \textbf{40} (2002), no.~4, 1583--1604. \MR{1951908}

\bibitem{milstein04snf}
G.~N. Milstein and M.~V. Tretyakov, \emph{Stochastic numerics for mathematical
  physics}, Scientific Computation, Springer-Verlag, Berlin, 2004. \MR{2069903}

\bibitem{milstein95nio}
Grigori~N. Milstein, \emph{Numerical integration of stochastic differential
  equations}, Mathematics and its Applications, vol. 313, Kluwer Academic
  Publishers Group, Dordrecht, 1995, Translated and revised from the 1988
  Russian original. \MR{1335454 (96e:65003)}

\bibitem{milstein02sio}
Grigori~N. Milstein, Yu.~M. Repin, and M.~V. Tretyakov, \emph{Symplectic
  integration of {H}amiltonian systems with additive noise}, SIAM J. Numer.
  Anal. \textbf{39} (2002), no.~6, 2066--2088 (electronic).

\bibitem{oksendal03sde}
Bernt {\O}ksendal, \emph{Stochastic differential equations}, sixth ed.,
  Universitext, Springer-Verlag, Berlin, 2003, An introduction with
  applications. \MR{2001996 (2004e:60102)}

\bibitem{roessler04ste}
Andreas R{\"o}{\ss}ler, \emph{Stochastic {T}aylor expansions for the
  expectation of functionals of diffusion processes}, Stochastic Anal. Appl.
  \textbf{22} (2004), no.~6, 1553--1576.

\bibitem{roessler06rta}
\bysame, \emph{Rooted tree analysis for order conditions of stochastic
  {R}unge--{K}utta methods for the weak approximation of stochastic
  differential equations}, Stoch. Anal. Appl. \textbf{24} (2006), no.~1,
  97--134.

\bibitem{sanzserna91ocf}
J.~M. Sanz-Serna and L.~Abia, \emph{Order conditions for canonical
  {R}unge--{K}utta schemes}, SIAM J. Numer. Anal. \textbf{28} (1991), no.~4,
  1081--1096.

\bibitem{seesselberg94soo}
M.~See{\ss}elberg, H.~P. Breuer, H.~Mais, F.~Petruccione, and J.~Honerkamp,
  \emph{Simulation of one-dimensional noisy {H}amiltonian systems and their
  application to particle storage rings}, Zeitschrift f{\"u}r Physik C
  Particles and Fields \textbf{62} (1994), no.~1, 63--73.

\bibitem{wang17cos}
Peng Wang, Jialin Hong, and Dongsheng Xu, \emph{Construction of symplectic
  {R}unge--{K}utta methods for stochastic {H}amiltonian systems}, Commun.
  Comput. Phys. \textbf{21} (2017), no.~1, 237--270. \MR{3579605}

\end{thebibliography}
\providecommand{\bysame}{\leavevmode\hbox to3em{\hrulefill}\thinspace}
\providecommand{\MR}{\relax\ifhmode\unskip\space\fi MR }
\providecommand{\MRhref}[2]{%
  \href{http://www.ams.org/mathscinet-getitem?mr=#1}{#2}
}
\providecommand{\href}[2]{#2}

\end{document}